\documentclass[journal]{IEEEtran}

\usepackage{amsmath,amssymb,amsfonts,amsthm,wasysym}
\usepackage{algorithmic}
\usepackage[boxed]{algorithm2e}
\usepackage{mathtools}
\usepackage[noadjust]{cite}
\usepackage{url}
\usepackage{hyperref}
\hypersetup{
     colorlinks = true,
     linkcolor = blue,
     anchorcolor = blue,
     citecolor = blue,
     filecolor = blue,
     urlcolor = blue
     }
\usepackage{euscript}
\usepackage{fontawesome}
\usepackage[bb=boondox]{mathalfa}
\usepackage{stmaryrd}
\usepackage{bm}
\usepackage{booktabs} 
\usepackage{multirow}
\usepackage{threeparttable}
\usepackage{balance}

\usepackage{pgfplots}
\usepackage{tikz}
\pgfplotsset{compat=1.17}
\usetikzlibrary{positioning}
\usepgfplotslibrary{fillbetween}
\usetikzlibrary{calc}
\usetikzlibrary{angles, quotes}

\newtheorem{theorem}{Theorem}

\theoremstyle{remark}

\newtheorem{remark}{Remark}

\newcommand{\fl}[1]{\mathrm{#1}}
\newcommand{\minimize}[1]{\underset{{#1}}{\text{min}}}
\newcommand{\maximize}[1]{\underset{{#1}}{\text{max}}}
\newcommand{\braceit}[1]{\left({#1}\right)}

\newcommand{\st}{\text{s.t.}}

\newcommand{\norm}[1]{\left\lVert{#1}\right\rVert}
\DeclareMathSymbol{\shortminus}{\mathbin}{AMSa}{"39}
\def\shortplus{
\begin{tikzpicture}
    \node[draw=none,scale=0.5,inner sep=0] at (0,0) {+};
\end{tikzpicture}
}
\newcommand\ncoverline[1]{\mkern1mu\overline{\mkern-1mu#1\mkern-1mu}\mkern1mu}

\definecolor{maincolor}{HTML}{000000}
\definecolor{blue}{RGB}{31,64,122}
\definecolor{red}{HTML}{A52A2A}
\definecolor{mygreen}{HTML}{228B22}

\usepackage{nomencl}
\makenomenclature

\usepackage{relsize}
\newcommand*{\mydot}{\mathrel{\mathsmaller{\mathsmaller{\odot}}}}



\begin{document}
\begingroup
\allowdisplaybreaks

\title{Multi-Stage Decision Rules for Power Generation \& Storage Investments with Performance Guarantees}

\author{Vladimir~Dvorkin,~\IEEEmembership{Member,~IEEE,}
        Dharik~Mallapragada 
        and~Audun~Botterud,~\IEEEmembership{Member,~IEEE,} 
\thanks{Vladimir Dvorkin and Dharik Mallapragada are with the Energy Initiative and Audun Botterud is with the Laboratory for Information \& Decision Systems of Massachusetts Institute of Technology, Cambridge,
MA, 02139, USA. E-mail: \{dvorkin,dharik,audunb\}@mit.edu. The authors acknowledge funding from the MIT Energy Initiative Future Energy Systems Center. Vladimir Dvorkin was also supported in part by by the Marie Sklodowska-Curie Actions and Iberdrola Group, Grant Agreement \textnumero101034297 – project Learning ORDER.}
}
\maketitle

\begin{abstract}
We develop multi-stage linear decision rules (LDRs) for dynamic power system generation and energy storage investment planning under uncertainty and propose their chance-constrained optimization with performance guarantees. First, the optimized LDRs guarantee operational and carbon policy feasibility of the resulting dynamic investment plan even when the planning uncertainty distribution is ambiguous. Second, the optimized LDRs internalize the tolerance of the system planner towards the stochasticity (variance) of uncertain investment outcomes. They can eventually produce a quasi-deterministic investment plan, which is insensitive to uncertainty (as in deterministic planning) but robust to its realizations (as in stochastic planning). Last, we certify the performance of the optimized LDRs with the bound on their sub-optimality due to their linear functional form. Using this bound, we guarantee that the preference of LDRs over less restrictive -- yet poorly scalable -- scenario-based optimization does not lead to financial losses exceeding this bound. 
We use a testbed of the U.S. Southeast power system to reveal the trade-offs between the cost, stochasticity, and feasibility of LDR-based investments. 
We also conclude that the LDR sub-optimality depends on the amount of uncertainty and the tightness of chance constraints on operational, investment and policy variables.
\end{abstract}

\begin{IEEEkeywords}
Generation and storage planning, multi-stage stochastic optimization, performance guarantees, carbon policy
\end{IEEEkeywords}

\IEEEpeerreviewmaketitle

\section*{Nomenclature}
\addcontentsline{toc}{section}{Nomenclature}
\subsection{Model dimensions}

\begin{IEEEdescription}[\IEEEusemathlabelsep\IEEEsetlabelwidth{$TT$}]
\item[$E$] Number of transmission lines, indexed by $e$
\item[$H$] Number of representative hours, indexed by $h$
\item[$N$] Number of network nodes, indexed by $i$
\item[$\Omega$] Number of uncertainty scenarios, indexed by $\omega$
\item[$T$] Number of investment stages, indexed by $t$
\item[$W$] Number of operating horizons, indexed by $w$
\end{IEEEdescription}

\subsection{Parameters}
\begin{IEEEdescription}[\IEEEusemathlabelsep\IEEEsetlabelwidth{$TTt$}]
\item[$b_{t}$] Stage-specific investment budget [\$]
\item[$c_{t}^{\mydot}$] Vectors of operating costs [\$/MWh] 
\item[$e^{\mydot}/\overline{e}_{t}$] Emission intensity / annual cap \![t\! CO$_2$/MWh \;/ t\! CO$_2$]
\item[$\overline{f}$] Vector of transmission line capacities [MW]
\item[$F$] Matrix of power transfer distribution factors [\%]
\item[$k_{twh}^{\mydot}$] Vectors of capacity factors [p.u.] 
\item[$\ell_{t}$] Vector of annual peak loads [MW]
\item[$o_{t}^{\mydot}$] Vectors of O\&M cost [\$/MW-year] 
\item[$\overline{p}_{t}$] Vector of existing generation capacities [MW] 
\item[$q_{t}^{\mydot}$] Vectors of investment cost [\$/MW] 
\item[$r^{\mydot}$] Up- and down-regulation capacity vectors [\%] 
\item[$\eta^{\mydot}$] Storage charging/discharging efficiency [\%]
\item[$\omega_{w}$] Annual weight of operating horizon $w$ [-]  
\item[$\overline{\mydot}^{\fl{max}}$] Maximum investment limit for technology $\mydot$ [MW]
\item[$\Sigma,\overline{\Sigma}$] Covariance matrix and its Cholesky factorization
\item[$\varepsilon^{\mydot},\overline{\varepsilon}^{\mydot}$] Joint and individual constraint violation probabilities
\end{IEEEdescription}

\subsection{Vectors of Decision Variables}
\begin{IEEEdescription}[\IEEEusemathlabelsep\IEEEsetlabelwidth{$TTt$}]
\item[$x_{t}^{\mydot}/z_{t}^{\mydot}$] Auxiliary variables for the distributionally robust reformulation of chance constraints [MW or MWh]
\item[$\overline{y}_{t}$] Candidate generation capacity [MW] 
\item[$\overline{\vartheta}_{t}$] Candidate storage energy capacity [MWh]
\item[$\overline{\varphi}_{t}$] Candidate storage dis/charging capacity [MW]
\item[$p_{twh}$] Existing generation dispatch [MWh] 
\item[$y_{twh}$] Candidate generation dispatch [MWh] 
\item[$\vartheta_{twh}$] Candidate storage state of charge [MWh]
\item[$\varphi_{twh}^{\mydot}$] Candidate storage dis/charging rate [MWh]
\end{IEEEdescription}

\subsection{Other Notation}
For a variable vector $x$, its upper-case counterpart $X$ denotes a coefficient matrix of the corresponding linear decision rule, e.g., for candidate generation capacity we will have $\overline{y}_{t}$ and $\overline{Y}_{t}$. Symbol $\circ$ stands for the Schur product. $\mathbb{0}$ and $\mathbb{1}$ denote vectors (matrices) of zeros and ones, respectively. Operator $|\!\cdot\!|$ is the absolute value operator, $\norm{\cdot}$ is the Euclidean norm and $\text{Tr}[\cdot]$ is the matrix trace.  The ordered set $1,\dots,N$ is denoted by $\llbracket N\rrbracket$. Superscript $+/-$ stands for charging/discharging.

\section{Introduction}

\IEEEPARstart{W}{e} study the co-optimization of generation and energy storage investments to guarantee a reliable, least-cost, and carbon policy-feasible future energy supply. This problem is challenging at least for two reasons. First, the investments are affected by planning uncertainty which spans long, multi-stage investment horizons, as illustrated in Fig. \ref{fig:capex}. Second, the investment decisions must be informed by many instances of the optimal power flow (OPF) problems with high spatial and temporal resolution, i.e., to capture intra- and inter-annual variations of renewable generation and electricity demand across large geographical areas \cite{conejo2016investment}. Therefore, such co-optimization tends to demonstrate high computational complexity.

\begin{figure}
\centering
\includegraphics[width=0.48\textwidth]{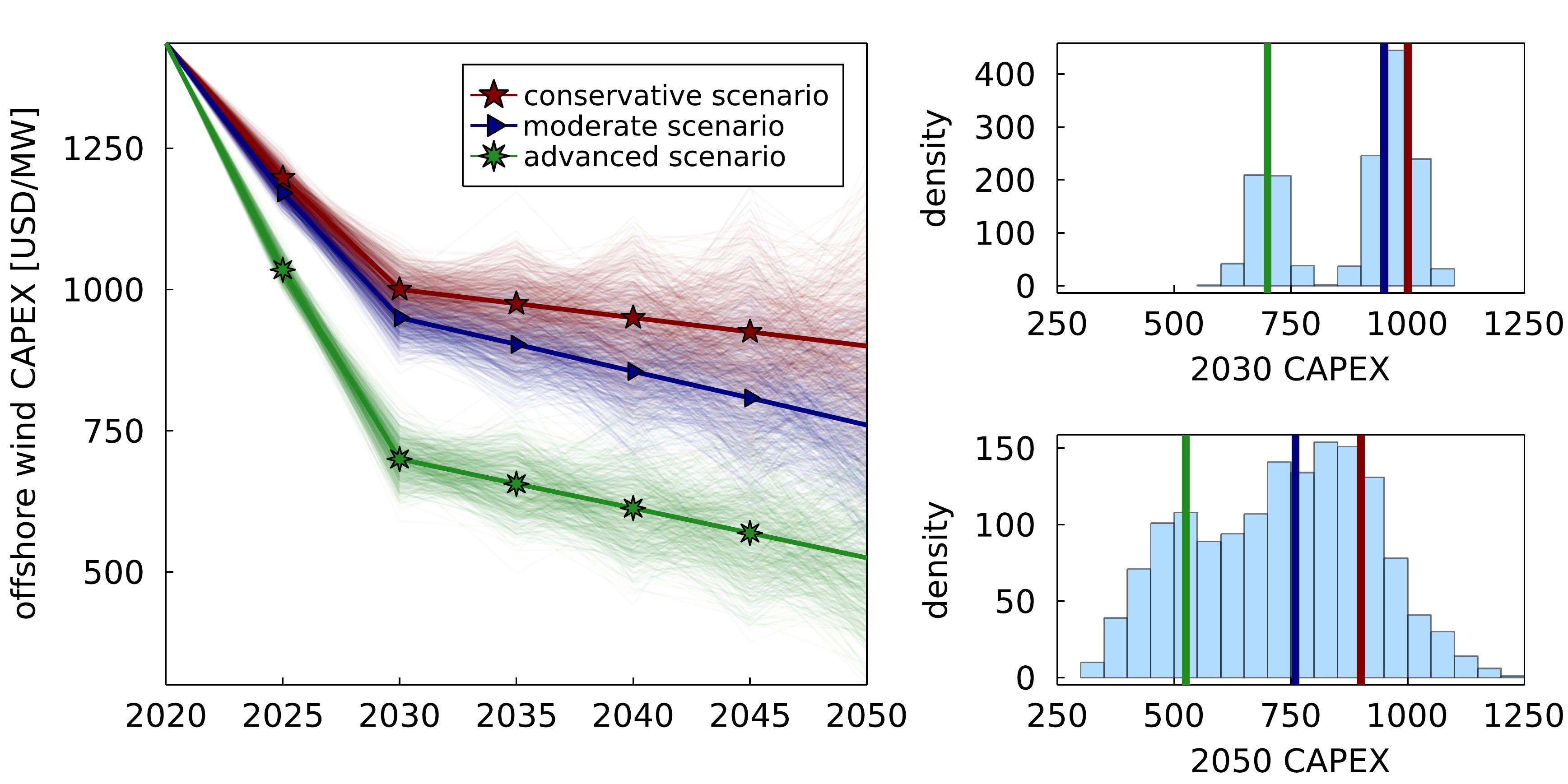}
\caption{Uncertainty of offshore wind CAPEX. Three thick lines depict baseline CAPEX scenarios from the 2021 NREL Annual Technology Baseline (ATB) \cite{vimmerstedt2021annual}, and the thin lines display 1500 uncertainty realization scenarios. This stochastic process is estimated using 2016--2021 data and assuming Log-Normal error distribution. The histograms on the right depict CAPEX probability densities for 2030 and 2050, where the vertical colored lines depict the baseline scenarios. Notably, the distribution shape, statistical moments and support substantially change throughout the planning horizon.}
\label{fig:capex}
\end{figure}

One solution to this problem is to discretize the planning uncertainty distribution using scenarios \cite{jin2011modeling}. Solving the scenario approximation to optimality, however, requires including at least $N$ in-sample scenarios, where $N$ is moderate for two-stage problems but prohibitively large ($\gg10^6$) for multi-stage investment programs \cite{shapiro2005complexity}. Notably, using fewer in-sample scenarios to achieve computational tractability often yields a poorer out-of-sample performance \cite{wiesemann2014distributionally}. To respect the in-sample requirement, one employs decomposition techniques, including dual decomposition \cite{liu2017multistage,dvorkin2018consensus} or stochastic dual dynamic programming \cite{zou2016nested}, while facing their intrinsic trade-offs between computational tractability and solution optimality.  

A scenario-free approximation of the stochastic investment problem is thus an appealing alternative. Velloso \textit{et al.} \cite{velloso2020distributionally} developed such an approximation that models the deviations from the baseline scenarios (as in Fig. \ref{fig:capex}) using the mean and distribution support information. In a similar distributionally robust manner, Pourahmadi \textit{et al.} \cite{pourahmadi2019distributionally} accommodated operational uncertainty of renewable generation assets. Their models, however, are limited to two-stage stochastic programming targeting the final-stage uncertainty (e.g., 2050 in Fig. \ref{fig:capex}), hence disregarding economic, operational and policy rationale of investments' recourse as uncertainty gradually resolves. 

The key to scenario-free approximations of the dynamic, multi-stage investment problems is in linear decision rules (LDRs) that substitute recourse decisions with linear functions of uncertain parameters, thus substantially reducing the problem complexity \cite{shapiro2005complexity,delage2015robust}. Specifically, if scenario-based stochastic programs grow exponentially in the number of investment stages, the LDR-based formulations grow only linearly. Multi-stage LDRs have been thus applied before by Dom{\'\i}nguez \textit{et al.} \cite{dominguez2016investing} to the multi-stage generation investment problem and by Dehghan \textit{et al.} \cite{dehghan2017adaptive} to the multi-stage transmission planning, showing a drastic complexity reduction for realistically scaled problems. 

Despite the progress in \cite{dominguez2016investing,dehghan2017adaptive}, the applications of LDRs to investment planning are limited for three important reasons. First, the LDRs in \cite{dominguez2016investing,dehghan2017adaptive} are optimized on a robust uncertainty set, which is ignorant of the temporal evolution of the planning uncertainty. Moreover, in robust optimization, the optimized investment plan is optimal with respect to the extreme, low-probability uncertainty outcomes, thus resulting in unnecessary conservatism in terms of investment planning costs. While the conservatism of the robust solution can be reduced by optimizing the size of the uncertainty set, e.g., using the adaptive uncertainty set optimization from \cite{liang2020robust}, it requires an additional optimization layer to identify the secure and cost-optimal size of the uncertainty set.
Second, restricting recourse decisions to linear functions makes them highly sensitive to uncertainty realizations \cite{dvorkin2021stochastic,dvorkin2022multi}. 
As a consequence, LDRs are likely to produce highly uncertain decarbonization pathways, hence providing less informed policy advice. 
Finally, although the LDR approximation is optimal for some rare optimization structures \cite{georghiou2021optimality}, in the context of large-scale capacity expansion it is likely to be sub-optimal with respect to the scenario approximation that makes no recourse assumptions. Consequently, the LDR sub-optimality translates into financial losses, reducing the attractiveness of LDR applications in practice.

\subsection{Contributions} 

Motivated by dynamic planning uncertainties and by missing performance guarantees of existing LDR-based approaches, we develop a new LDR approximation of the stochastic multi-stage generation and energy storage investment planning. We make the following technical contributions:

\textit{1)} We develop a chance-constrained multi-stage LDR optimization, which combines the specificity of stochastic programming (by modeling uncertainty using probability distributions) and robust optimization (by hedging against distributional uncertainty), to immunize the generation and energy storage investment planning against planning uncertainty. This optimization guarantees feasibility of LDR-guided investments up to the given tolerance to investment, engineering and carbon policy constraint violation, and is recast as a second-order cone program solved efficiently using off-the-shelf software. 

\textit{2)} We study investment sensitivities to uncertainty and find that the standard expected cost-minimization postpones investments and accumulates significant variance of the investment plan, making decarbonization pathways highly uncertain. We thus extend the LDR optimization to control investment sensitivities by constraining the variance of investment decision rules. This variance-constrained optimization reveals the trade-off between the cost and uncertainty of the investment plan. By tightening the variance constraints, we obtain the so-called {\it quasi-deterministic} investment plan -- the fixed plan, which is insensitive to uncertainty but robust to its realizations. This plan eventually identifies the optimal adjustments of deterministic investments to immunize them against uncertainty.

\textit{3)} To guarantee the economic performance of investment LDRs, we develop a framework to bound the sub-optimality of the \textit{optimized} investment LDRs, compared to a less restrictive -- yet unattainable at scale -- scenario-based solution, which does not require linear recourse. Using this bound, we guarantee that switching from scenario to LDR approximation of the optimal investments will not incur a loss exceeding this bound on average. To analyze the likelihood of the worst-case LDR sub-optimality, we devise a bilevel optimization problem for small problem instances to learn those uncertainty realizations that cause the largest sub-optimality. 

\subsection{Paper Organization}

Section \ref{sec:from_det_to_sto} explains preliminaries, uncertainty modeling and the chance-constrained investment co-optimization. Section \ref{sec:cc_ldr} introduces the LDR approximation. Section \ref{sec:inv_var_red_and_det_eq} details the variance-constrained optimization, and Section \ref{sec:sub-optimality_gurantees} details the LDR sub-optimality analysis. Section \ref{sec:case_study} demonstrates investment LDRs on a model of the U.S. Southeast's power system. Extended primal and dual problem formulations are relegated to Appendix \ref{app:full_tract_ref} and \ref{app:sto_dual}. Section \ref{sec:conclusions} concludes. All data and code are available in the e-companion in \cite{dvorkin2022}. 

\section{From Deterministic to Stochastic Multi-Stage Power System Investment Planning}\label{sec:from_det_to_sto}
\subsection{Deterministic Problem Formulation}
We consider a $T-$stage investment horizon, where each stage models system operations using a sequence of $W$ operating horizons, each including $H$ representative hours, as depicted in Fig. \ref{fig:planning_scheme}. We thus solve $T\times W$ multi-period OPF problems to identify the feasible and least-cost set points for all generators and energy storage systems. OPF problems and investment decisions are co-optimized by minimizing the planning cost function:
\begin{subequations}\label{prog:base}
\begin{align}
\minimize{\mathcal{V}}\quad&\textstyle\sum_{t=1}^{T}\bigg(\underbrace{q_{t}^{\text{g}\top}\overline{y}_{t} + q_{t}^{\text{s}\top}\overline{\vartheta}_{t} + q_{t}^{\text{p}\top}\overline{\varphi}_{t}}_{\text{annualized investment cost}} + \nonumber\\
&\quad \underbrace{o_{t}^{\text{e}\top}\overline{p}_{t} +  \textstyle\sum_{\tau=1}^{t}\Big(o_{t}^{\text{c}\top}\overline{y}_{\tau} + o_{t}^{\text{s}\top}\overline{\vartheta}_{\tau} + o_{t}^{\text{p}\top}\overline{\varphi}_{\tau}\Big)}_{\text{annualized operation and maintenance (O\&M) cost}} + \nonumber \\
    &\quad\quad\underbrace{\textstyle\sum_{w=1}^{W}\omega_{w}\textstyle\sum_{h=1}^{H}\big(c_{t}^{\text{e}\top}p_{twh} + c_{t}^{\text{c}\top}y_{twh}\big)}_{\text{annualized fuel cost}}\bigg),
    \label{eq_obj}
\end{align}
where the first term includes stage--specific investment costs of power generation $(q_{t}^{\text{g}\top}\overline{y}_{t})$, energy storage $(q_{t}^{\text{s}\top}\overline{\vartheta}_{t})$ and charging/discharging power $(q_{t}^{\text{p}\top}\overline{\varphi}_{t})$, the second term models stage--specific O\&M costs of existing and built candidate units, and the last term models stage-specific total fuel costs weighted by weight $\omega_{w}$ of the operating horizon $w$ in the year.
Although the linear investment and O\&M costs can be combined, we still differentiate them as they can be affected by uncertainties differently.
The set $\mathcal{V}=\{\overline{y},y,p,\overline{\vartheta},\vartheta,\overline{\varphi},\varphi^{\shortplus},\varphi^{\shortminus}\}$ includes investment and operational variables, subject to engineering, policy and investment constraints. 

The following OPF constraints
\begin{align}
&\mathbb{1}^{\top}\braceit{p_{twh} + y_{twh} + \varphi_{twh}^{\shortminus} - k_{twh}^{\ell}\circ\ell_{t} - \varphi_{twh}^{\shortplus}} = 0,\label{prog:base_eq}\\
&\big|F\braceit{p_{twh} + y_{twh} + \varphi_{twh}^{\shortminus} - k_{twh}^{\ell}\circ\ell_{t} - \varphi_{twh}^{\shortplus}}\big|\leqslant\overline{f},\label{prog:base_flow_max}
\end{align}
are enforced to ensure the system-wide energy balance, by off-setting the consumption of variable electric loads and storage charging $k_{twh}^{\ell}\circ\ell_{t} + \varphi_{twh}^{\shortplus} $ by dispatched generation and storage discharging $p_{twh} + y_{twh} + \varphi_{twh}^{\shortminus} $ in \eqref{prog:base_eq}, and the satisfaction of the power line limits in \eqref{prog:base_flow_max}, using a PTDF-based DC power flow representation \cite{chatzivasileiadis2018optimization}. Unlike the angle-based power flow model, this representation does not require modeling voltage angles as optimization variables, thus yielding a more compact problem formulation. Generation capacity limits are enforced for the existing and candidate units as
\begin{align}
&\mathbb{0}\leqslant p_{twh} \leqslant k_{twh}^{\fl{e}}\circ\overline{p}_{t},\label{prog:gen_cap_ex}\\
&\mathbb{0}\leqslant y_{twh} \leqslant k_{twh}^{\fl{c}}\circ \textstyle\sum_{\tau=1}^{t}\overline{y}_{\tau},\label{prog:gen_cap_can}
\end{align}
accounting for power factors that amount to $1$ for conventional units and vary between $0$ and $1$ for renewable units. To differentiate generation technologies not only by costs but also by their flexibility, the inequalities
\begin{align}
&-r^{\fl{e}\shortminus}\circ\overline{p}_{t} \leqslant p_{twh} - p_{tw(h-1)} \leqslant r^{\fl{e}\shortplus}\circ\overline{p}_{t},\label{prog:gen_ramp_exist}\\
& y_{twh} - y_{tw(h-1)} \geqslant -r^{\fl{c}\shortminus}\circ\textstyle\sum_{\tau=1}^{t}\overline{y}_{\tau},  \\
& y_{twh} - y_{tw(h-1)} \leqslant r^{\fl{c}\shortplus} \circ \textstyle\sum_{\tau=1}^{t}\overline{y}_{\tau},\label{prog:gen_ramp_cand}
\end{align}
are introduced to maintain inter-temporal generation changes within the maximum up- and down-regulation limits, respectively defined as a percentage $r^{\odot}$ of the installed capacity, which are set to 100\% for renewable energy technologies. The energy storage operation is modeled using equations
\begin{align}
&\vartheta_{twh} = \vartheta_{tw(h-1)} + \varphi_{twh}^{\shortplus}\eta^{\shortplus} - \varphi_{twh}^{\shortminus}/\eta^{\shortminus},\label{prog:stor_sos}
\end{align}
where we assume zero initial state-of-charge $\vartheta_{tw0}=\mathbb{0}$ for all investment stages and representative periods, and enforcing the following set of constraints on the storage state of charge, charging and discharging variables:
\begin{align}
&\mathbb{0}\leqslant \vartheta_{twh} \leqslant \textstyle\sum_{\tau=1}^{t}\overline{\vartheta}_{\tau}, \label{prog:stor_sos_lim}\\
&\mathbb{0}\leqslant\varphi_{twh}^{\shortplus} \leqslant \textstyle\sum_{\tau=1}^{t}\overline{\varphi}_{\tau}, \label{prog:stor_charging}\\
&\mathbb{0}\leqslant\varphi_{twh}^{\shortminus} \leqslant \textstyle\sum_{\tau=1}^{t}\overline{\varphi}_{\tau}, \label{prog:stor_discharging}\\
&\varphi_{twh}^{\shortplus} + \varphi_{twh}^{\shortminus} \leqslant \textstyle\sum_{\tau=1}^{t}\overline{\varphi}_{\tau}.\label{prog:stor_charge_discharge}
\end{align}
To preserve convexity, decision variables in \eqref{prog:stor_charging}--\eqref{prog:stor_charge_discharge} are not constrained to explicitly prevent simultaneous charging and discharging. This modeling choice is not limiting, because we model an aggregated behavior of a large population of asynchronous storage devices, e.g., totaling 10s of GWs in Section \ref{sec:case_study}. Importantly, in our experiments, we also do not observe negative pricing -- the sufficient condition for simultaneous charging/discharging to occur in practice \cite[Proposition 4]{xu2020lagrangian}.
The next modeling choice is to disregard the explicit representation of storage degradation, e.g., as a non-convex function of daily cycles. Instead, the fixed O\&M costs for Li-ion battery systems $o_{t}^{\text{s}}$ and $o_{t}^{\text{p}}$ account for periodic replacements of battery cells to maintain usable capacity throughout the asset life -- the modeling approach suggested by the NREL annual technology baseline studies \cite{vimmerstedt2021annual,xu2022role}.

\begin{figure}
    \centering
    \fbox{
    \resizebox{0.47\textwidth}{!}{%
    \includegraphics{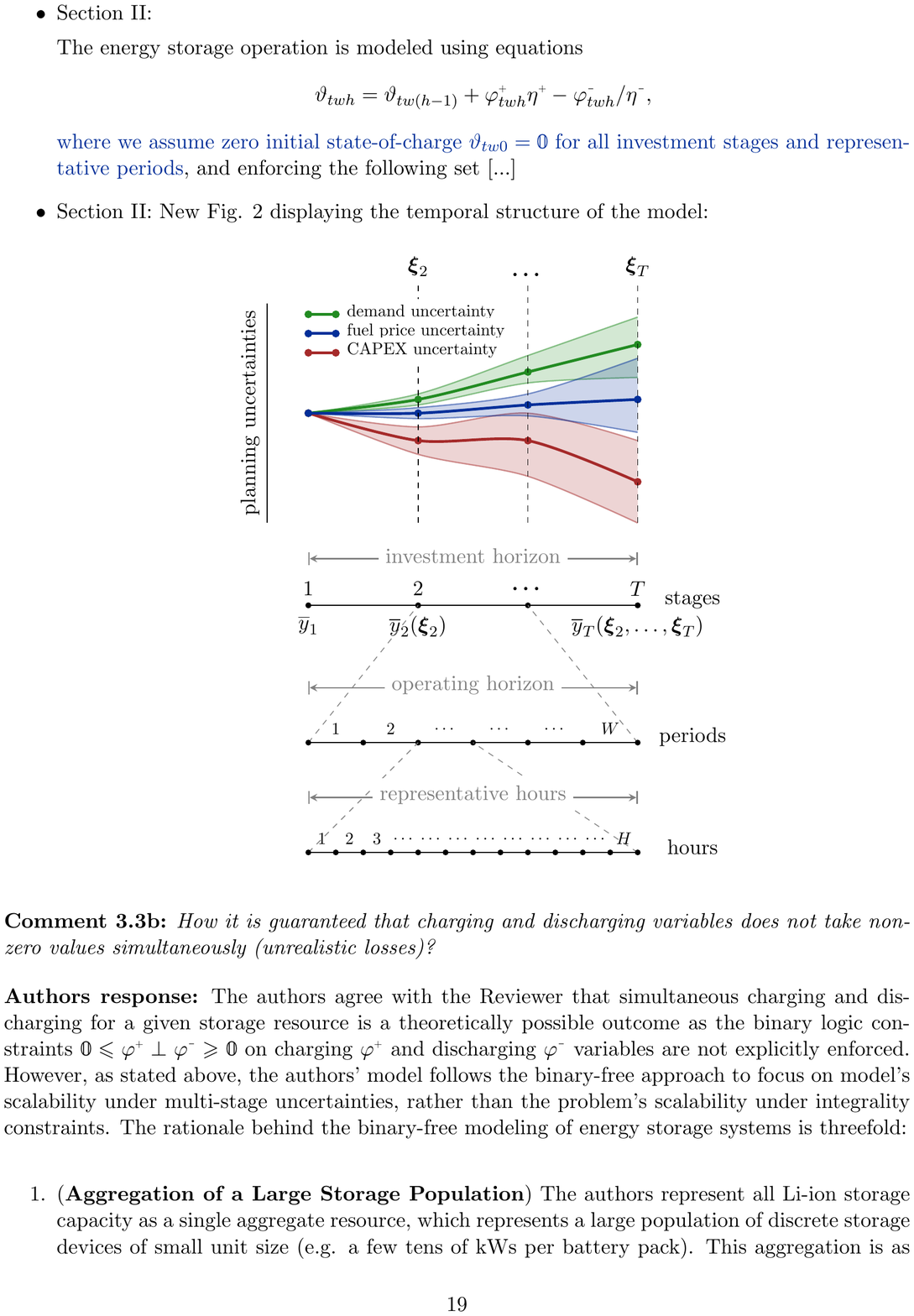}
    }}
    \caption{Decision-making timeline: investment horizon is split into $T$ stages; at any stage, the investment decisions are affected by planning uncertainty realized upon that stage. The operating horizon between the stages is modeled using $W$ representative periods, each consisting of $H$ operating hours.}
    \label{fig:planning_scheme}
\end{figure}

To model the carbon policy constraints, consider the annual CO$_{2}$ limit enforced on power generation as
\begin{align}
\textstyle\sum_{w=1}^{W}\omega_{w}\textstyle\sum_{h=1}^{H}\braceit{e^{\fl{e}\top}p_{twh} + e^{\fl{c}\top}y_{twh}} \leqslant \overline{e}_{t}.\label{prog:policy_cap}
\end{align}
The last constraints model generation and energy storage investment limits (e.g., due to engineering or permitting limits) and stage-specific annualized budget limits, i.e.,
\begin{align}
&
\mathbb{0}\leqslant\overline{y}_{t}\leqslant \overline{y}_{t}^{\fl{max}},\;
\mathbb{0}\leqslant\overline{\varphi}_{t}\leqslant \overline{\varphi}_{t}^{\fl{max}},\;
\mathbb{0}\leqslant\overline{\vartheta}_{t}\leqslant \overline{\vartheta}_{t}^{\fl{max}},\label{prog:inv_lim_1}\\
&
q_{t}^{\text{g}\top}\overline{y}_{t} + q_{t}^{\text{s}\top}\overline{\vartheta}_{t} + q_{t}^{\text{p}\top}\overline{\varphi}_{t} \leqslant b_{t}.
\label{prog:inv_lim_2}
\end{align}
\end{subequations}
Constraints \eqref{prog:base_eq}--\eqref{prog:inv_lim_2} must hold for all stages $t\in\llbracket T\rrbracket$, operating horizons $w\in\llbracket W\rrbracket$ and representative hours $h\in\llbracket H\rrbracket$.

\begin{remark}
For compactness, in formulation \eqref{prog:base} we assume that each node hosts at most one exiting/candidate generator, load, or energy storage unit. In our model implementation in \cite{dvorkin2022}, however, we use incidence matrices for all network components to accommodate arbitrary network configurations.
\end{remark}

\subsection{Modeling Uncertainty}

To model uncertainty in planning optimization data, we express the data as a function of unit-mean random variables $\boldsymbol{\xi}=(\boldsymbol{\xi}_{1},\dots,\boldsymbol{\xi}_{T})\in\mathbb{R}^{n}$, where $n$ is the total number of random variables.  For any investment stage $t$, vector $\boldsymbol{\xi}_{t}\in\mathbb{R}^{n_{t}}$ collects $n_{t}$ random variables revealed {\it at} stage $t$, and vector $\boldsymbol{\xi}^{t}=(\boldsymbol{\xi}_{1},\dots,\boldsymbol{\xi}_{t})\in\mathbb{R}^{n^{t}}$ collects $n^{t}$ random variables revealed {\it up to} stage $t$. Consistency then requires $n=\sum_{t=1}^{T}n_{t}$ and $n^{t}=\sum_{\tau=1}^{t}n_{\tau}$. We also assume that there is no uncertainty at the first stage (hear-and-now) and set $\boldsymbol{\xi}_{1}$ to $1$ ($n_{1}=1$). To extract $\boldsymbol{\xi}^{t}$ from $\boldsymbol{\xi}$, consider truncation matrix $S_{t}\in\mathbb{R}^{n^{t}\times n}$, such that $\boldsymbol{\xi}^{t}=S_{t}\boldsymbol{\xi}$. That is, the left block of matrix $S_{t}$ is an $n^{t}\times n^{t}$ identity matrix, and the remaining elements of $S_{t}$, if any, are zeros. The uncertain data is then defined as
\begin{align}\label{eq:uncertain_data_def}
    \begin{aligned}
    &q_{t}^{\mydot}(\boldsymbol{\xi}^{t}) = Q_{t}^{\mydot}S_{t}\boldsymbol{\xi}, \;\;  \ell_{t}(\boldsymbol{\xi}^{t}) = L_{t}S_{t}\boldsymbol{\xi},\\
    & 
    c_{t}^{\mydot}(\boldsymbol{\xi}^{t}) = C_{t}^{\mydot}S_{t}\boldsymbol{\xi}, \;\;  \overline{e}_{t}(\boldsymbol{\xi}^{t}) = \overline{E}_{t}S_{t}\boldsymbol{\xi},
    \end{aligned}
\end{align}
for investment and fuel cost, demand and carbon cap stochastic processes, where matrices $Q_{t}^{\mydot},L_{t},C_{t}^{\mydot}\in\mathbb{R}^{N\times n^{t}}$ and $\overline{E}_{t}\in\mathbb{R}^{1\times n^{t}},$ $\forall t\in\llbracket T\rrbracket$ are devised by experts. Here, we use subscripts $\odot=\{\fl{g,s,p}\}$ for investment cost $Q_{t}^{\mydot}$, and $\odot=\{\fl{e,c}\}$ for fuel cost $C_{t}^{\mydot}$. These matrices are constructed such that their first columns collect the $1^{\text{st}}$-stage value of the parameter of interest, and the remaining columns include additive changes of that parameter across the investment horizon. If there is no uncertainty, i.e., $\boldsymbol{\xi}=\mathbb{E}[\boldsymbol{\xi}]=\mathbb{1}$, then the multiplication of these matrices by a unit vector yields a deterministic value of that parameter. The distribution of random variable $\boldsymbol{\xi}$ thus encodes the uncertainty of additive changes of planning parameters. The code in the e-companion in \cite{dvorkin2022} details the construction of matrices in \eqref{eq:uncertain_data_def}. To account for the errors in uncertainty estimation, we suppose that the exact distribution of the random variable is unknown, but it belongs to the following family of distributions:
\begin{align}\label{eq:amb_set}
\mathcal{P} = \{\mathbb{P}:\mathbb{E}[\boldsymbol{\xi}]=\mathbb{1},\;\mathbb{E}[\boldsymbol{\xi}\boldsymbol{\xi}^{\top}]=\Sigma\},
\end{align}
where set $\mathcal{P}$ encapsulates all probability measures with given first- and second-order moments $\mathbb{1}$ and $\Sigma$, respectively. 

\subsection{Chance-Constrained Problem Formulation}
To optimize investment decisions under planning uncertainty, we put forth the multi-stage chance-constrained program \eqref{prog:sto}, which minimizes the expected cost, i.e., 
\begin{subequations}\label{prog:sto}
\begin{align}
&\mathbb{E}\Bigg[\sum_{t=1}^{T}\bigg(
\underbrace{q_{t}^{\text{g}}(\boldsymbol{\xi}^{t})^{\top}\overline{y}_{t}(\boldsymbol{\xi}^{t}) + 
q_{t}^{\text{s}}(\boldsymbol{\xi}^{t})^{\top}\overline{\vartheta}_{t}(\boldsymbol{\xi}^{t}) + 
q_{t}^{\text{p}}(\boldsymbol{\xi}^{t})^{\top}\overline{\varphi}_{t}(\boldsymbol{\xi}^{t})}_{\textcolor{maincolor}{\text{stochastic annualized investment cost}}}  \nonumber\\
&+\underbrace{o_{t}^{\text{e}\top}\overline{p}_{t} +\sum_{\tau=1}^{t}\Big(
o_{t}^{\text{c}\top}\overline{y}_{\tau}(\boldsymbol{\xi}^{\tau}) + 
o_{t}^{\text{s}\top}\overline{\vartheta}_{\tau}(\boldsymbol{\xi}^{\tau}) + 
o_{t}^{\text{p}\top}\overline{\varphi}_{\tau}(\boldsymbol{\xi}^{\tau})\Big)}_{\textcolor{maincolor}{\text{stochastic annualized operation and maintenance (O\&M) cost}}}  \nonumber \\
&+\underbrace{\!\sum_{w=1}^{W}\!\omega_{w}\!\sum_{h=1}^{H}\big(
c_{t}^{\text{e}}(\boldsymbol{\xi}^{t})^{\top}p_{twh}(\boldsymbol{\xi}^{t}) + 
c_{t}^{\text{c}}(\boldsymbol{\xi}^{t})^{\top}y_{twh}(\boldsymbol{\xi}^{t})\big)\!\!\bigg)}_{\textcolor{maincolor}{\text{stochastic annualized fuel cost}}}\!\Bigg],
\label{prog:sto_obj}
\end{align}
where expectation $\mathbb{E}$ is taken for the worst-case uncertainty distribution in ambiguity set $\mathcal{P}$, subject to stochastic equalities and \textcolor{maincolor}{probabilistic} constraints, i.e., 
\begin{align}
&\left.
\begin{aligned}
&\mathbb{1}^{\top}\big(p_{twh}(\boldsymbol{\xi}^{t}) + y_{twh}(\boldsymbol{\xi}^{t}) + \varphi_{twh}^{\shortminus}(\boldsymbol{\xi}^{t})\\
&\quad- k_{twh}^{\ell}\circ\ell_{t}(\boldsymbol{\xi}^{t}) - \varphi_{twh}^{\shortplus}(\boldsymbol{\xi}^{t}) \big)= 0\\
&\vartheta_{twh}(\boldsymbol{\xi}^{t})=\vartheta_{tw(h-1)}(\boldsymbol{\xi}^{t})\\
&\quad+\varphi_{twh}^{\shortplus}(\boldsymbol{\xi}^{t})\eta^{\shortplus}-\varphi_{twh}^{\shortminus}(\boldsymbol{\xi}^{t})/\eta^{\shortminus}
\end{aligned}
\right\}\forall\boldsymbol{\xi}^{t},\label{cc_as}\\
&\mathbb{P}\!\!\left[
    \begin{aligned}
    &|F\big(p_{twh}(\boldsymbol{\xi}^{t}) + y_{twh}(\boldsymbol{\xi}^{t}) + \varphi_{twh}^{\shortminus}(\boldsymbol{\xi}^{t})\\
    &\quad - k_{twh}^{\ell}\circ\ell_{t}(\boldsymbol{\xi}^{t}) - \varphi_{twh}^{\shortplus}(\boldsymbol{\xi}^{t}) \big)|\leqslant\overline{f}
    \end{aligned}
\right]\!\!\geqslant\!1\!-\!\varepsilon^{\fl{f}},\label{cc_flow}\\
&\mathbb{P}\!\!\left[
    \begin{aligned}
    &\mathbb{0}\leqslant p_{twh}(\boldsymbol{\xi}^{t}) \leqslant k_{twh}^{\fl{e}}\circ\overline{p}_{t}\\
    &\mathbb{0}\leqslant y_{twh}(\boldsymbol{\xi}^{t}) \leqslant k_{twh}^{\fl{c}}\circ \textstyle\sum_{\tau=1}^{t}\overline{y}_{\tau}(\boldsymbol{\xi}^{\tau})
    \end{aligned}
\right]\!\!\geqslant\!1\!-\!\varepsilon^{\fl{g}},\label{cc_gen}\\
&\mathbb{P}\!\!\left[
    \begin{aligned}
    &-r^{\fl{e}\shortminus}\!\circ\overline{p}_{t} \leqslant p_{twh}(\boldsymbol{\xi}^{t}) - p_{tw(h-1)}(\boldsymbol{\xi}^{t}) \leqslant r^{\fl{e}\shortplus}\!\circ\overline{p}_{t}\\
    &y_{twh}(\boldsymbol{\xi}^{t}) - y_{tw(h-1)}(\boldsymbol{\xi}^{t}) \geqslant -r^{\fl{c}\shortminus}\circ\textstyle\sum_{\tau=1}^{t}\overline{y}_{\tau}(\boldsymbol{\xi}^{\tau})\\
    &y_{twh}(\boldsymbol{\xi}^{t}) - y_{tw(h-1)}(\boldsymbol{\xi}^{t})  \leqslant r^{\fl{c}\shortplus} \circ\textstyle\sum_{\tau=1}^{t}\overline{y}_{\tau}(\boldsymbol{\xi}^{\tau})
    \end{aligned}
\right]\!\!\geqslant\!1\!-\!\varepsilon^{\fl{r}},\!\!\label{cc_ramp}\\[-2.5ex]
&\mathbb{P}\!\!\left[
    \begin{aligned}
    &\mathbb{0}\leqslant \vartheta_{twh}(\boldsymbol{\xi}^{t}) \leqslant \textstyle\sum_{\tau=1}^{t}\overline{\vartheta}_{\tau}(\boldsymbol{\xi}^{\tau})\\
    &\mathbb{0}\leqslant\varphi_{twh}^{\shortplus}(\boldsymbol{\xi}^{t}) \leqslant \textstyle\sum_{\tau=1}^{t}\overline{\varphi}_{\tau}(\boldsymbol{\xi}^{\tau})\\
    &\mathbb{0}\leqslant\varphi_{twh}^{\shortminus}(\boldsymbol{\xi}^{t}) \leqslant \textstyle\sum_{\tau=1}^{t}\overline{\varphi}_{\tau}(\boldsymbol{\xi}^{\tau})\\
    &\varphi_{twh}^{\shortplus}(\boldsymbol{\xi}^{t}) + \varphi_{twh}^{\shortminus}(\boldsymbol{\xi}^{t}) \leqslant \textstyle\sum_{\tau=1}^{t}\overline{\varphi}_{\tau}(\boldsymbol{\xi}^{\tau})
    \end{aligned}
\right]\!\!\geqslant\!1\!-\!\varepsilon^{\fl{s}},\!\!\label{cc_stor}\\
&\mathbb{P}\!\!\left[
    \begin{aligned}
    &\textstyle\sum_{w=1}^{W}\omega_{w}\textstyle\sum_{h=1}^{H}\big(e^{\fl{e}\top}p_{twh}(\boldsymbol{\xi}^{t}) \\
    &\quad+ e^{\fl{c}\top}y_{twh}(\boldsymbol{\xi}^{t})\big) \leqslant \overline{e}_{t}(\boldsymbol{\xi}^{t})
    \end{aligned}
\right]\!\!\geqslant\!1\!-\!\varepsilon^{\fl{e}},\label{cc_emis}\\
&\mathbb{P}\!\!\left[
    \begin{aligned}
    &\mathbb{0}\leqslant\overline{y}_{t}(\boldsymbol{\xi}^{t})\leqslant \overline{y}_{t}^{\fl{max}}\\
    &\mathbb{0}\leqslant\overline{\varphi}_{t}(\boldsymbol{\xi}^{t})\leqslant \overline{\varphi}_{t}^{\fl{max}}\\
    &\mathbb{0}\leqslant\overline{\vartheta}_{t}(\boldsymbol{\xi}^{t})\leqslant \overline{\vartheta}_{t}^{\fl{max}}
    \end{aligned}
\right]\!\!\geqslant\!1\!-\!\varepsilon^{\fl{i}},\label{cc_inv}\\
&\textcolor{maincolor}{\mathbb{E}
\big[
q_{t}^{\text{g}}(\boldsymbol{\xi}^{t})^{\top}\overline{y}_{t}(\boldsymbol{\xi}^{t}) + 
q_{t}^{\text{s}}(\boldsymbol{\xi}^{t})^{\top}\overline{\vartheta}_{t}(\boldsymbol{\xi}^{t})} \nonumber\\
&\quad\quad\quad\quad\quad\quad\quad\quad+ 
\textcolor{maincolor}{q_{t}^{\text{p}}(\boldsymbol{\xi}^{t})^{\top}\overline{\varphi}_{t}(\boldsymbol{\xi}^{t})
\big]\leqslant b_{t},}\label{exp_budget}\\
&\forall\mathbb{P}\in\mathcal{P},\;\forall t\in\llbracket T\rrbracket,\;\forall w\in\llbracket W\rrbracket,\;\forall h\in\llbracket H\rrbracket,\nonumber
\end{align}
\end{subequations}
where decision variables are required to depend solely on the current- and previous-stage uncertainty $\boldsymbol{\xi}^{t}$, hence reflecting the non-anticipative nature of the investment problem. Stochastic equalities \eqref{cc_as} must hold for all uncertainty realizations from the unknown distribution $\mathbb{P}_{\boldsymbol{\xi}^{t}}$, which is reasonable as the investment plan that violates energy flow and storage conservation laws is of no interest to planning. The series of chance constraints \eqref{cc_flow}--\eqref{cc_inv} respectively require the joint satisfaction of power flow, generation dispatch, ramping, storage dispatch, carbon policy and investment constraints to be satisfied with probabilities at least $1-\varepsilon^{\mydot}\geqslant0$, where $\varepsilon^{\mydot}$, for $\odot=\{\fl{f,g,r,s,e,i}\}$, are prescribed parameters. Parameters $\varepsilon^{\mydot}$ internalize the tolerance to the joint constraint violation of the respective group of constraints. For example, Bienstock \textit{et al.} in \cite{bienstock2014chance} argue that assigning $\varepsilon^{\fl{f}}\gg\varepsilon^{\fl{g}}$ is reasonable, since the short-term overloading of overhead transmission lines can be tolerated in practice, while generation constraints must be respected with a very high probability. Below, we will show that probabilities $\varepsilon^{\mydot}$ act as trade-off parameters between the cost \eqref{prog:sto_obj} of the investment plan and its feasibility. \textcolor{maincolor}{Finally, constraint \eqref{exp_budget} requires the expected investment cost to remain below a prescribed threshold $b_{t}$ at each investment stage.
} 

\section{Stochastic Problem Reformulation in LDRs}\label{sec:cc_ldr}
To obtain a tractable approximation for chance-constrained problem \eqref{prog:sto}, the form of the decision variables is restricted to linear functions. That is, the investment decisions are now representable as linear decision rules
\begin{subequations}\label{eq:LDRs}
\begin{align}
\overline{y}_{t}(\boldsymbol{\xi}^{t}) = \overline{Y}_{t}S_{t}\boldsymbol{\xi}, 
\;\; 
\overline{\vartheta}_{t}(\boldsymbol{\xi}^{t}) = \overline{\Theta}_{t} S_{t}\boldsymbol{\xi},
\;\; 
\overline{\varphi}_{t}(\boldsymbol{\xi}^{t}) = \overline{\Phi}_{t} S_{t}\boldsymbol{\xi},
\end{align}
where $\overline{Y}_{t},\overline{\Theta}_{t},\overline{\Phi}_{t}\in\mathbb{R}^{N\times n^{t}}$ are matrices that define investment response to uncertainty $\forall t\in\llbracket T\rrbracket$, thus subject to optimization. Operational variables are also expressed as linear functions
\begin{align}
\begin{aligned}
y_{twh}(\boldsymbol{\xi}^{t}) = Y_{twh} S_{t}\boldsymbol{\xi}, 
\quad 
\vartheta_{twh}(\boldsymbol{\xi}^{t}) = \Theta_{twh}S_{t}\boldsymbol{\xi},
\\
\varphi_{twh}^{\mydot}(\boldsymbol{\xi}^{t}) = \Phi_{twh}^{\mydot}S_{t}\boldsymbol{\xi},
\quad 
p_{twh}(\boldsymbol{\xi}^{t}) = P_{twh}S_{t}\boldsymbol{\xi},
\end{aligned}
\end{align}
\end{subequations}
that are subject to optimization. 
\textcolor{maincolor}{To capture the LDR intuition, consider the expansion of the generation investment LDR:
\begin{align}\label{eq:LDR_expansion}
\overline{y}_{t}(\boldsymbol{\xi}^{t}) = \overline{Y}_{t}S_{t}\boldsymbol{\xi} = 
\begin{bmatrix}
\overline{Y}_{11} & \overline{Y}_{12} & \dots & \overline{Y}_{1t}
\\
\vdots & \vdots & \ddots & \vdots\\
\overline{Y}_{N1} & \overline{Y}_{N2} & \dots & \overline{Y}_{Nt}
\end{bmatrix}
\begin{bmatrix}
1\\
\boldsymbol{\xi}_{2}\\
\vdots\\
\boldsymbol{\xi}_{t}
\end{bmatrix},
\end{align}
where the rows of $\overline{Y}_{t}$ identify the response of the investments in $N$ candidate generation units at stage $t$ to the realizations of random vector $\boldsymbol{\xi}^{t}$. By design, the first column of $\overline{Y}_{t}$ collects the nominal investment decisions, which are then adjusted by recourse decisions in the remaining columns. A row of matrix $\overline{Y}_{t}$ computes the response of investments into a particular generation unit to random variables realized up to stage $t$. For example, suppose that the planner faces the uncertainty of demand $(\xi^{d})$, investment costs $(\xi^{q})$, and operating costs $(\xi^{c})$ starting from stage 2 onward. Then, for a particular generation unit $i\in\llbracket N\rrbracket$, the investment rule $\overline{y}_{it}(\boldsymbol{\xi}^{t}) $ expands as: 
\begin{align*}
    \overline{Y}_{i1} + \sum_{j=2}^{t}\overline{Y}_{ij}\boldsymbol{\xi}_{j}= 
    \overline{Y}_{i1} + \sum_{j=2}^{t}\left(\overline{Y}_{ij1}\xi_{j}^{d}+\overline{Y}_{ij2}\xi_{j}^{q}+\overline{Y}_{ij3}\xi_{j}^{c}\right),
\end{align*}
where matrix $\overline{Y}_{ij}\in\mathbb{R}^{1\times3}$ computes the investment response to the three uncertainty sources at stage $j\geqslant2$. This way, the optimized matrix $\overline{Y}_{t}$ guides the entire generation investment portfolio as uncertainties gradually realize throughout the investment horizon. While this linear functional form allows for a tractable reformulation of problem \eqref{prog:sto}, the linear response can be sub-optimal. In Section \ref{sec:sub-optimality_gurantees} we develop two methods to assess the sub-optimality of LDRs, and we report numerical results in Section \ref{sec:case_study}. 
\begin{remark}
To model discrete investments (e.g., for indivisible or modular generation and energy storage units) the linear investment decision rules in \eqref{eq:LDRs} can be substituted with their binary counterparts \cite{bertsimas2018binary}. This, in turn, will require the use of lifting techniques from \cite{zhang2018ambiguous} to preserve the guarantees of chance constraints enforced on binary decisions. Another approach is to solve a projection problem that projects the optimized linear decision rule under a specific scenario onto a discrete space of available investment options. The authors relegate the integrality of investment decisions to future work. 
\end{remark}}

The uncertainty representation \eqref{eq:uncertain_data_def} and LDRs \eqref{eq:LDRs} allow for the following scenario-free reformulation of problem \eqref{prog:sto}. First, the objective function reformulates considering that $\mathbb{E}[S_{t}\boldsymbol{\xi}] = S_{t}\mathbb{1}$ and $\mathbb{E}[S_{t}\boldsymbol{\xi}(S_{t}\boldsymbol{\xi})^\top]=\text{Tr}[S_{t}(\Sigma + \mathbb{1}\mathbb{1}^{\top}) S_{t}^{\top}]$, where $\mathbb{1}$ and $\Sigma$ are the moments of $\boldsymbol{\xi}$. We set $\widehat{\Sigma}=\Sigma + \mathbb{1}\mathbb{1}^{\top}$ for brevity.  Then, objective function \eqref{prog:sto_obj} reformulates analytically into
\begin{subequations}\label{prog:LDR_approximation}
\begin{align}
&\sum_{t=1}^{T}\Big(
\text{Tr}[S_{t}\widehat{\Sigma} S_{t} (Q_{t}^{\text{g}\top}\overline{Y}_{t} + 
Q_{t}^{\text{s}\top}\overline{\Theta}_{t} +
Q_{t}^{\text{p}\top}\overline{\Phi}_{t})
]\nonumber\\
&\quad+o_{t}^{\text{e}\top}\overline{P}_{t}S_{t}\mathbb{1} 
+\sum_{\tau=1}^{t}\Big(
o_{t}^{\text{c}\top}\overline{Y}_{\tau}  + {}
o_{t}^{\text{s}\top}\overline{\Theta}_{\tau} + 
o_{t}^{\text{p}\top}\overline{\Phi}_{\tau}\Big)S_{\tau}\mathbb{1}  \nonumber \\
&\quad\quad+\sum_{w=1}^{W}\omega_{w}\sum_{h=1}^{H}\text{Tr}[
S_{t}\widehat{\Sigma} S_{t} (
C_{t}^\fl{e}P_{twh} + C_{t}^\fl{c}Y_{twh})
]\Big).\label{prog:LDR_obj}
\end{align}
Second, to reformulate stochastic equalities \eqref{cc_as}, we use the fact that enforcing stochastic equation $X_{t}S_{t}\boldsymbol{\xi}=\mathbb{0}$, for some properly dimensioned matrix $X_{t}$, is equivalent to enforcing a deterministic equation $X_{t}=\mathbb{0}$. Hence, the power balance will hold for any uncertainty realization when 
\begin{align}
    \mathbb{1}^{\top}\big(P_{twh} + Y_{twh} + \Phi_{twh}^{\shortplus} - k_{twh}^{\ell}\circ L_{t} - \Phi_{twh}^{\shortminus} \big) = \mathbb{0},\label{LDR_eq_balance}
\end{align}
and the storage state of charge equation will be satisfied when 
\begin{align}
    \Theta_{twh} - \Theta_{tw(h-1)} - \Phi_{twh}^{\shortplus}\eta^{\shortplus}+ \Phi_{twh}^{\shortminus}/\eta^{\shortminus} = \mathbb{0},\label{LDR_eq_soc}
\end{align}
at all time stages, operating horizons and representative hours. Observe, that the objective function and equality reformulations in \eqref{prog:LDR_obj}--\eqref{LDR_eq_soc} do not require distributional assumptions.

To reformulate individual and joint chance constraints in a distributionally robust manner, we use Chebyshev inequality and Bonferroni approximation \cite{xie2017distributionally,xie2019optimized}. \textcolor{maincolor}{From \cite{xie2017distributionally} we know that an individual, distributionally robust, and single-sided chance constraint of the form
\begin{align*}
    \mathbb{P}[\boldsymbol{\xi}^{\top}x\leqslant b]\geqslant1-\varepsilon
\end{align*}
 translates into tractable second-order cone constraint
\begin{align*}
    \sqrt{(1-\varepsilon)/\varepsilon}\norm{\overline{\Sigma}x}\leqslant b-x^{\top}\mathbb{1},
\end{align*}
where $\overline{\Sigma}$ is the Cholesky decomposition of the covariance matrix, i.e., $\ncoverline{\Sigma}\ncoverline{\Sigma}^{\top}=\Sigma$, and the square root term is the safety factor from the Chebyshev inequality to ensure distributional robustness. The smaller the $\varepsilon$, the large the left-hand-side of the second-order cone constraint, hence the most robust the nominal solution $x$ to uncertainty realizations. Following the same reformulation, individual carbon constraints in \eqref{cc_emis} reformulate into the following second-order cone constraints:}
\begin{align}
&\tilde{\varepsilon}^{\fl{e}}\norm{\overline{\Sigma}
\left[
\Big(\overline{E}_{t}\!-\!\textstyle\sum_{w=1}^{W}\omega_{w}\textstyle\sum_{h=1}^{H}\big(e^{\fl{e}\top}P_{twh}\!+\!e^{\fl{c}\top}Y_{twh}\big)\Big)S_{t}
\right]^{\top}
}\nonumber\\
&\leqslant\Big(\overline{E}_{t}\!-\!\textstyle\sum_{w=1}^{W}\omega_{w}\textstyle\sum_{h=1}^{H}\big(e^{\fl{e}\top}P_{twh}\!+\!e^{\fl{c}\top}Y_{twh}\big)\Big)S_{t}\mathbb{1},\label{eq:single_sided_ref}
\end{align}
for all investment stages $t$, where prescribed parameter 
\textcolor{maincolor}{$\tilde{\varepsilon}^{\fl{e}}=\sqrt{(1-\varepsilon^{\fl{e}})/\varepsilon^{\fl{e}}}$ to ensure distributional robustness.} 

While such a reformulation fits the single-sided constraints, it is generally overly conservative for double-sided constraints, because for any uncertainty realization, such constraints can not be violated simultaneously from both sides. Indeed, decision variables for power flows in \eqref{cc_flow}, existing generation in \eqref{cc_gen}, and investments in \eqref{cc_inv} can not violate their respective minimum and maximum limits simultaneously. To address this issue, we first split the joint chance constraints in a series of individual double-sided constraints (as per Bonferroni approximation \cite{xie2019optimized}), and then apply \cite[Theorem 2]{xie2017distributionally} to obtain its less conservative reformulation. \textcolor{maincolor}{A joint chance constraint 
\begin{align*}
    \mathbb{P}[\underline{y}\leqslant Y\boldsymbol{\xi}\leqslant \overline{y}]\geqslant1-\varepsilon
\end{align*}
with variable $Y\in\mathbb{R}^{k\times n}$ and parameters $\underline{y},\overline{y}\in\mathbb{R}^{k}$ is split into $k$ individual chance constraints as 
\begin{align*}
    \mathbb{P}[\underline{y}_{i}\leqslant Y_{i}^{\top}\boldsymbol{\xi}\leqslant \overline{y}_{i}]\geqslant1-\overline{\varepsilon}_{i},\quad\forall i\in\llbracket k\rrbracket,
\end{align*}
where were require that $\sum_{i=1}^{k}\overline{\varepsilon}_{i}=\varepsilon$ to preserve the joint constraint satisfaction guarantee. Then, the double-sided individual chance constraints reformulate into a set of one second-order cone and five linear constraints as: 
\begin{align*}\left\{
    \begin{aligned}
    &\norm{
    \begin{bmatrix*}
    \overline{\Sigma}Y_{i}^{\top}\\
    z_{i}
    \end{bmatrix*}
    }\leqslant
    \sqrt{\overline{\varepsilon}_{i}}\left(\frac{\overline{y}_{i} - \underline{y}_{i}}{2} - x_{i}\right)\\
    &\left|Y_{i}^{\top}\mathbb{1}\right|\leqslant z_{i}+x_{i}, \quad \frac{\overline{y}_{i} - \underline{y}_{i}}{2} \geqslant x_{i} \geqslant 0, \quad z_{i} \geqslant 0
    \end{aligned}\right.
\end{align*}
where $z_{i}$ and $x_{i}$ are two auxiliary variables. Following the same method, the joint constraint \eqref{cc_inv} is split into $3N$ individual double-sided constraints, i.e., for the $i^{\fl{th}}$ candidate generation unit we have
}
\begin{align}
\mathbb{P}\!\!\left[
    0\leqslant\overline{y}_{ti}(\boldsymbol{\xi}^{t})\leqslant \overline{y}_{ti}^{\fl{max}}
\right]\geqslant 1 - \overline{\varepsilon}^{\fl{i}},
\end{align}
where $\overline{\varepsilon}^{\fl{i}} = \varepsilon^{\fl{i}}/(3N)$. Then its exact distributionally robust reformulation takes the form: 
\begin{align}{}
&\norm{
\begin{bmatrix*}
\overline{\Sigma}[\overline{Y}_{t}S_{t}]_{i}^{\top}\\
z_{ti}^{\overline{y}}
\end{bmatrix*}
}\leqslant
\sqrt{\overline{\varepsilon}^{\fl{i}}}\left(\tfrac{1}{2}\overline{y}_{ti}^{\fl{max}} - x_{ti}^{\overline{y}}\right),\label{cc_inv_ref_1}\\
&\left|[\overline{Y}_{t}S_{t}]_{i}\mathbb{1} - \tfrac{1}{2}\overline{y}_{ti}^{\fl{max}}\right| \leqslant z_{ti}^{\overline{y}} + x_{ti}^{\overline{y}}\\
&\tfrac{1}{2}\overline{y}_{ti}^{\fl{max}} \geqslant x_{ti}^{\overline{y}} \geqslant 0,\quad z_{ti}^{\overline{y}} \geqslant 0,\label{cc_inv_ref_3}
\end{align}
\end{subequations}
where $x_{ti}^{\overline{y}}$ and $z_{ti}^{\overline{y}}$ are auxiliary variables. The rest of double-sided constraints are reformulated in a similar manner. 

\textcolor{maincolor}{Finally, the left-hand side of the investment budget constraint \eqref{exp_budget} is reformulated in the same way as the investment cost term in the objective function \eqref{prog:LDR_obj}.}

We refer to Appendix \ref{app:full_tract_ref} for the full reformulation of problem \eqref{prog:sto} into a second-order cone program. 

\section{Variance-Constrained Investments: From Stochastic to Quasi-Deterministic Planning} \label{sec:inv_var_red_and_det_eq}

The multi-stage stochastic problem \eqref{prog:sto} optimizes the expected cost of investment planning, which is ignorant of the variance of investment results. Moreover, it has been shown in prior work that restricting recourse decisions to LDRs makes optimization decisions highly sensitive to uncertainty \cite{dvorkin2021stochastic}. Such risk-neutral LDR optimization tends to produce an unhedged variance of investment decisions, thus making decarbonization pathways highly uncertain in terms of investment costs and structure. In this section, we develop a variance-constrained investment optimization, which acts on the same uncertainty information as problem \eqref{prog:sto} but outputs such an investment plan, which is less sensitive (or even insensitive) to planning uncertainty, yet remains robust to its realizations. We term such a plan \textit{quasi-deterministic}. Such a plan can eventually identify the optimal adjustment of the deterministic investment plan from problem \eqref{prog:base} to immunize investments and future system operations against uncertainty.

To obtain the quasi-deterministic investment plan, observe that the investment variance is a convex function in investment LDRs. For example, for generation investments, we have\begin{align}\label{eq:variance}
\text{Var}[\overline{Y}\!_{t}S_{t}\boldsymbol{\xi}]\!=\!\mathbb{E}[(\overline{Y}\!_{t}S_{t}\boldsymbol{\xi})(\overline{Y}\!_{t}S_{t}\boldsymbol{\xi})\!^{\top}] 
\!=\! 
\text{Tr}[\overline{Y}\!_{t}S_{t}\widehat{\Sigma} (\overline{Y}\!_{t}S_{t})\!^{\top}],
\end{align}
which is a convex function in variable $\overline{Y}_{t}$. Hence, to produce the variance-constrained solution, it is sufficient to optimize coefficient matrix $\overline{Y}_{t}$ while constraining the magnitude of \eqref{eq:variance}. Towards the goal, we use the following proxy constraints:
\begin{subequations}\label{prog:var_con}
\begin{align}
&\norm{\overline{\Sigma}[\overline{Y}_{t}S_{t}]_{i}^{\top}}\leqslant\alpha_{i}^{\overline{y}}[\overline{Y}_{t}S_{t}]_{i}\mathbb{1},\quad \forall i\in\llbracket N\rrbracket, \quad \forall t\in\llbracket T\rrbracket,\label{var_con_1}\\
&\norm{\overline{\Sigma}[\overline{\Theta}_{t}S_{t}]_{i}^{\top}}\leqslant\alpha_{i}^{\overline{\vartheta}}[\overline{\Theta}_{t}S_{t}]_{i}\mathbb{1},\quad \forall i\in\llbracket N\rrbracket, \quad \forall t\in\llbracket T\rrbracket,\\
&\norm{\overline{\Sigma}[\overline{\Phi}_{t}S_{t}]_{i}^{\top}}\leqslant\alpha_{i}^{\overline{\varphi}}[\overline{\Phi}_{t}S_{t}]_{i}\mathbb{1},\quad \forall i\in\llbracket N\rrbracket, \quad \forall t\in\llbracket T\rrbracket,\label{var_con_3}
\end{align}
\end{subequations}
where the norm terms compute the standard deviation of the respective investment decisions into generation and storage capacities, and the right-hand-side terms compute the mean values of those decisions multiplied by the prescribed non-negative coefficients $\alpha^{\mydot}$. Thus, the standard deviation -- and the variance -- of the investment plan can now be controlled by a certain portion of the mean value when adding constraints \eqref{prog:var_con} to the base LDR optimization problem \eqref{prog:full_tract_ref}. Notice that setting $\alpha^{\mydot} \rightarrow+\infty$ results in the variance-agnostic solution and $\alpha^{\mydot} \rightarrow0$ yields a zero-variance investment solution. The latter is always feasible because the norms in \eqref{prog:var_con} do not restrict the first column of LDRs. For example, for $T=n=2$, the norm in the generation LDR at the second stage expands as
\begin{align*}
    \norm{\!
    \begin{bmatrix}
    0 & 0\\
    0 & \sigma_{2}
    \end{bmatrix}
    \!
    \begin{bmatrix}
    \overline{Y}_{11} & \overline{Y}_{12} \\
    \vdots & \vdots \\
    \overline{Y}_{N1} & \overline{Y}_{N2}
    \end{bmatrix}_{i}^{\top}
    \!}\!=\!
    \norm{\!
    \begin{bmatrix}
    0 & 0\\
    0 & \sigma_{2}
    \end{bmatrix}
    \!
    \begin{bmatrix}
    \overline{Y}_{i1} \\ \overline{Y}_{i2}
    \end{bmatrix}
    \!}\!=\!
    \norm{\!
    \begin{bmatrix}
    \sigma_{2}\overline{Y}_{i2}
    \end{bmatrix}
    \!},
\end{align*}
meaning that there always exists a nominal generation investment decision $\overline{Y}_{i1}$ which makes the variance-constrained solution feasible. However, imposing constraints \eqref{prog:var_con} will provide a more conservative solution in terms of expected costs. By varying $\alpha^{\mydot}$, we will eventually reveal the cost-variance trade-off in the investment planning under uncertainty.

\section{On Investment Decision Rule Sub-Optimality} \label{sec:sub-optimality_gurantees}

The linear variable dependency on the planning uncertainty in problem \eqref{prog:sto} is an additional, implicit constraint on investment planning. Hence, the LDR-based planning is likely to demonstrate additional economic inefficiency compared to the less conservative -- yet poorly scalable -- scenario-based stochastic programming. Therefore, this section provides the framework for the quantitative and qualitative assessment of the investment LDR sub-optimality to offer the corresponding \textit{a priori} performance guarantees. 

\subsection{Global LDR Sub-Optimality Bound}

We first develop the global bound to guarantee that the preference of LDRs over scenario-based stochastic programming does not lead to financial losses exceeding this bound. Our bound is inspired by the duality-based method from robust optimization in \cite{kuhn2011primal}, which we extend here to the case of chance-constrained optimization. To compute this bound, consider first a stylized, compact version of problem \eqref{prog:sto}:
\begin{subequations}\label{prog:bound_inf_primal}
\begin{align}
\minimize{x\in\mathbb{R}^{n}}\quad& \text{P}(x) \triangleq \mathbb{E}\left[\textstyle\sum_{t=1}^{T}c_{t}(\boldsymbol{\xi}^{t})^{\top}x_{t}(\boldsymbol{\xi}^{t})\right]\\
\st\quad&\mathbb{P}\left[\textstyle\sum_{\tau=1}^{t}A_{\tau}x_{\tau}(\boldsymbol{\xi}^{\tau})\geqslant b_{t}(\boldsymbol{\xi}^{t})\right]\geqslant1-\varepsilon_{t},
\end{align}
\end{subequations}
$\forall t\in\llbracket T\rrbracket$, where the expected planning cost is minimized subject to the joint chance constraints and where vectors $c=(c_{1},\dots,c_{T})$ and $b=(b_{1},\dots,b_{T})$ are uncertain and modeled similarly to \eqref{eq:uncertain_data_def}. Notation $\text{P}(x)$ denotes the primal optimization objective, which is a function of the investment and operational decisions collected in vector $x=(x_{1},\dots,x_{T})$. For primal problem \eqref{prog:bound_inf_primal}, we formulate the corresponding dual problem: 
\begin{subequations}\label{prog:bound_inf_dual}
\begin{align}
\maximize{\lambda\in\mathbb{R}^{m}}\quad&\text{D}(\lambda) \triangleq \mathbb{E}\left[\textstyle\sum_{t=1}^{T}b_{t}(\boldsymbol{\xi}^{t})^{\top}\lambda_{t}(\boldsymbol{\xi}^{t})\right]\\
\st\quad&\mathbb{P}\left[\textstyle\sum_{\tau=t}^{T}A_{t}^{\top}\lambda_{\tau}(\boldsymbol{\xi}^{\tau})\leqslant c_{t}(\boldsymbol{\xi}^{t})\right]\geqslant1-\varepsilon_{t},
\end{align}
\end{subequations}
$\forall t\in\llbracket T\rrbracket,$ where the dual objective function $\text{D}(\lambda)$ in variable $\lambda=(\lambda_{1},\dots,\lambda_{T})$ is maximized subject to the dual joint chance constraints. Problem \eqref{prog:bound_inf_dual} is a stylized, compact version of the dual problem of \eqref{prog:sto}, which we place in Appendix \ref{app:sto_dual} for completeness. Recall, that solving the two problems to optimality using scenario-based stochastic programming is computationally intractable at scale. However, we can approximate the optimal solution $(x^{\star},\lambda^\star)$ using linear decision rules $x_{t}(\boldsymbol{\xi}^{t}) = X_{t}S_{t}\boldsymbol{\xi}$ and $\lambda_{t}(\boldsymbol{\xi}^{t}) = \Lambda_{t}S_{t}\boldsymbol{\xi}$ for all $t\in\llbracket T\rrbracket$, and by solving the primal LDR approximation problem
\begin{subequations}\label{prog:bound_apprx_primal}
\begin{align}
\minimize{X}\quad&\textstyle \overline{\text{P}}(X) \triangleq \sum_{t=1}^{T}\text{Tr}\big[S_{t}\widehat{\Sigma}S_{t}^{\top}C_{t}^{\top}X_{t}\big]\\
\st\quad&\tilde{\varepsilon}_{t}\norm{\overline{\Sigma}[\textstyle\sum_{\tau=1}^{t}A_{\tau}X_{\tau}S_{\tau}-B_{t}S_{t}]_{i}^\top}\nonumber\\
&\quad\quad\quad\leqslant[\textstyle\sum_{\tau=1}^{t}A_{\tau}X_{\tau}S_{\tau}-B_{t}S_{t}]_{i}\mathbb{1},\nonumber\\
&\quad\quad\quad\quad\quad\quad\quad\quad\quad\forall i\in\llbracket m\rrbracket,\;\forall t\in\llbracket T\rrbracket,
\end{align}{}
\end{subequations}
and the dual LDR approximation problem
\begin{subequations}\label{prog:bound_apprx_dual}
\begin{align}
\maximize{\Lambda}\quad&\textstyle \overline{\text{D}}(\Lambda) \triangleq \sum_{t=1}^{T}\text{Tr}\big[S_{t}\widehat{\Sigma}S_{t}^{\top}B_{t}^{\top}\Lambda_{t}\big]\\
\st\quad&\tilde{\varepsilon}_{t}\norm{\overline{\Sigma}[C_{t}S_{t}-\textstyle\sum_{\tau=t}^{T}A_{\tau}^\top\Lambda_{\tau}S_{\tau}]_{i}^\top}\nonumber\\
&\quad\quad\quad\leqslant[C_{t}S_{t}-\textstyle\sum_{\tau=t}^{T}A_{\tau}^\top\Lambda_{\tau}S_{\tau}]_{i}\mathbb{1},\nonumber\\
&\quad\quad\quad\quad\quad\quad\quad\quad\quad\forall i\in\llbracket n\rrbracket,\;\forall t\in\llbracket T\rrbracket,
\end{align}
\end{subequations}
which are obtained using the same means as those explained in Section \ref{sec:cc_ldr}. By solving these two problems, we can obtain the global bound on the sub-optimality of the investment LDRs. 
\begin{theorem}\label{th:gap}\normalfont
Let $X^{\star}$ and $\Lambda^{\star}$ be the optimal solution of the primal and dual approximation problems \eqref{prog:bound_apprx_primal} and \eqref{prog:bound_apprx_dual}, respectively. Then, the duality gap 
$\overline{\text{P}}(X^{\star})-\overline{\text{D}}(\Lambda^{\star})$ is the global bound on the LDR sub-optimality. 
\end{theorem}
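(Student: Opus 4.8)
The plan is to sandwich the true optimal planning cost $\text{P}(x^\star)$ (the value attainable with \emph{arbitrary}, not necessarily linear, recourse) between the two LDR approximations and then read off the bound. Concretely, I would establish the chain
\begin{align*}
\overline{\text{D}}(\Lambda^\star)\;\leqslant\;\text{D}(\lambda^\star)\;\leqslant\;\text{P}(x^\star)\;\leqslant\;\overline{\text{P}}(X^\star),
\end{align*}
after which the primal LDR sub-optimality satisfies $\overline{\text{P}}(X^\star)-\text{P}(x^\star)\leqslant \overline{\text{P}}(X^\star)-\overline{\text{D}}(\Lambda^\star)$ because $\text{P}(x^\star)\geqslant\overline{\text{D}}(\Lambda^\star)$. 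This delivers exactly the claimed global bound, and by the same chain it simultaneously bounds the dual sub-optimality $\text{D}(\lambda^\star)-\overline{\text{D}}(\Lambda^\star)$. Note that only the middle (weak-duality) inequality, not strong duality, is required, so the bound remains valid even if \eqref{prog:bound_inf_primal}--\eqref{prog:bound_inf_dual} admit a genuine duality gap.

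The two outer inequalities are \emph{restriction} arguments. An LDR $x_t(\boldsymbol{\xi}^t)=X_tS_t\boldsymbol{\xi}$ is a particular measurable recourse policy, and by the Chebyshev/Bonferroni reformulation of Section \ref{sec:cc_ldr} the second-order cone constraints of \eqref{prog:bound_apprx_primal} are a sufficient (conservative) certificate for the distributionally robust chance constraints of \eqref{prog:bound_inf_primal}; hence every feasible point of \eqref{prog:bound_apprx_primal} is a feasible recourse policy in \eqref{prog:bound_inf_primal}. Moreover, because the objective is \emph{bilinear} in $\boldsymbol{\xi}$, its expectation under any $\mathbb{P}\in\mathcal{P}$ depends only on the moments $\mathbb{1}$ and $\Sigma$ and therefore equals the trace form $\overline{\text{P}}(X)$ for every $\mathbb{P}$. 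Minimizing over the smaller (LDR) feasible set thus gives $\overline{\text{P}}(X^\star)\geqslant \text{P}(x^\star)$. The identical argument applied to the maximization problems \eqref{prog:bound_inf_dual} and \eqref{prog:bound_apprx_dual} yields $\overline{\text{D}}(\Lambda^\star)\leqslant \text{D}(\lambda^\star)$.

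The crux, and the step I expect to be the main obstacle, is weak duality $\text{D}(\lambda^\star)\leqslant\text{P}(x^\star)$ for the chance-constrained pair. The usual Lagrangian telescoping argument breaks down because the coupling inequalities $\sum_{\tau\leqslant t}A_\tau x_\tau\geqslant b_t$ hold only with probability $1-\varepsilon_t$, not surely. I would resolve this by exploiting that the Chebyshev reformulation of \cite{xie2017distributionally} renders each constraint equivalent to a \emph{robust} constraint over a common ellipsoidal set $\mathcal{U}$ whose $\ncoverline{\Sigma}$-radius is fixed by the safety factor $\tilde\varepsilon_t$. On $\mathcal{U}$ both the primal and dual constraints hold \emph{pointwise}, so for every $\boldsymbol{\xi}\in\mathcal{U}$ the standard multistage telescoping goes through:
\begin{align*}
\sum_{t=1}^{T} b_t(\boldsymbol{\xi}^t)^\top \lambda_t(\boldsymbol{\xi}^t)
&\leqslant \sum_{t=1}^{T}\Big(\sum_{\tau=1}^{t} A_\tau x_\tau(\boldsymbol{\xi}^\tau)\Big)^{\!\top}\! \lambda_t(\boldsymbol{\xi}^t)\\
&= \sum_{\tau=1}^{T} x_\tau(\boldsymbol{\xi}^\tau)^\top \Big(\sum_{t=\tau}^{T} A_\tau^\top \lambda_t(\boldsymbol{\xi}^t)\Big)\\
&\leqslant \sum_{\tau=1}^{T} c_\tau(\boldsymbol{\xi}^\tau)^\top x_\tau(\boldsymbol{\xi}^\tau),
\end{align*}
where the first inequality uses primal feasibility with $\lambda_t\geqslant0$, the equality is a Fubini swap over the triangular index set that turns the ``up-to-$t$'' primal coupling into the ``from-$t$-on'' dual coupling, and the last inequality uses dual feasibility with $x_\tau\geqslant0$ (the sign restrictions being inherited from the physical nonnegativity of the dispatch/investment variables and the multipliers of the inequality constraints).

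It remains to pass from this realization-wise inequality to the objectives. Here the same moment-based reduction used above is the cleanest route: both $\overline{\text{P}}$ and $\overline{\text{D}}$, and the values $\text{P}(x^\star),\text{D}(\lambda^\star)$ restricted to LDR policies, are determined by $\mathbb{1}$ and $\Sigma$ alone, so integrating the pointwise bound against a worst-case measure in $\mathcal{P}$ supported on $\mathcal{U}$ with the prescribed moments transfers the inequality to the objective values without reference to higher moments. I would flag this measure-existence/integration step as the delicate point to verify carefully (it is the analogue, for chance constraints, of the robust weak-duality lemma of \cite{kuhn2011primal}); once it is in place, the four-term chain closes and the duality gap $\overline{\text{P}}(X^\star)-\overline{\text{D}}(\Lambda^\star)$ is certified as the \emph{a priori} global bound on the LDR sub-optimality.
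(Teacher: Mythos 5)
Your skeleton is exactly the paper's: the same four-term chain $\overline{\text{D}}(\Lambda^\star)\leqslant\text{D}(\lambda^\star)\leqslant\text{P}(x^\star)\leqslant\overline{\text{P}}(X^\star)$, with restriction arguments for the two outer inequalities and weak duality between \eqref{prog:bound_inf_primal} and \eqref{prog:bound_inf_dual} in the middle. Your handling of the outer inequalities is actually more careful than the paper's: the paper does not spell out that the Bonferroni/Chebyshev second-order cone constraints are a \emph{conservative} certificate of the chance constraints, nor that the bilinearity of the objective in $\boldsymbol{\xi}$ makes the LDR objective value depend only on the moments $(\mathbb{1},\Sigma)$ and hence coincide with the trace form; both observations are correct and genuinely needed for the restriction step to make sense.

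The divergence is in the middle inequality, and there your argument has a concrete gap. The paper simply asserts that ``at least the weak duality holds'' between \eqref{prog:bound_inf_primal} and \eqref{prog:bound_inf_dual}; you attempt to prove it, but the route fails. The equivalence of a distributionally robust chance constraint to a robust constraint over a fixed ellipsoid $\mathcal{U}$ holds only when the constraint function is \emph{affine} in $\boldsymbol{\xi}$, i.e., for LDR policies. The optima $x^\star,\lambda^\star$ of \eqref{prog:bound_inf_primal}--\eqref{prog:bound_inf_dual} are arbitrary measurable policies: a feasible $x^\star$ may violate $\sum_{\tau=1}^{t}A_\tau x^\star_\tau(\boldsymbol{\xi}^\tau)\geqslant b_t(\boldsymbol{\xi}^t)$ on a small-probability set \emph{inside} $\mathcal{U}$, so the pointwise telescoping does not apply to them. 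Likewise, the moment-determinacy of the objective requires bilinearity in $\boldsymbol{\xi}$, which again holds for LDR policies but not for $x^\star,\lambda^\star$, so integrating your pointwise bound against a moment-matching measure supported on $\mathcal{U}$ does not transfer the inequality to $\text{P}(x^\star)$ and $\text{D}(\lambda^\star)$. As written, your telescoping argument only establishes the inequality within the LDR-restricted class, i.e., $\overline{\text{P}}(X^\star)\geqslant\overline{\text{D}}(\Lambda^\star)$, which yields nonnegativity of the gap but not the sandwich around the true optimum that the sub-optimality bound requires. To close the proof you must either take weak duality for the chance-constrained pair as given (which is what the paper does) or prove it by an argument that explicitly controls the contribution of the probability-$\varepsilon$ violation sets to the two expectations; your sketch does neither.
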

\begin{proof}
Let $\tilde{x}$ and $\tilde{\lambda}$ be the optimal primal and dual solutions of stochastic problems \eqref{prog:bound_inf_primal} and \eqref{prog:bound_inf_dual}, respectively. Since the primal LDR approximation constrains the decisions to be linear in planning uncertainty, we know that the LDR objective function is equal or above the optimal solution, i.e., $\overline{\text{P}}(X^{\star}) \geqslant \text{P}(\tilde{x})$. Similarly, we know that the restrictive dual LDR approximation is equal or below the optimal solution, i.e.,  $\overline{\text{D}}(\Lambda^{\star}) \leqslant \text{D}(\tilde{\lambda})$. For problems \eqref{prog:bound_inf_primal} and \eqref{prog:bound_inf_dual}, at least the weak duality holds, i.e., $\text{P}(\tilde{x})\geqslant\text{D}(\tilde{\lambda}).$ Hence, the LDR duality gap is always non-negative, i.e., $\overline{\text{P}}(X^{\star}) - \overline{\text{D}}(\Lambda^{\star})\geqslant0,$ and globally bounds the LDR sub-optimality.  
\end{proof}

Using this result, we can \textit{a priori} guarantee that the average economic loss resulting from the implementation of the optimized investment LDRs does not exceed the global bound. 

\subsection{Learning the Worst-Case Sub-Optimality Scenarios}

This section extends the LDR sub-optimality analysis to learn the likelihood of the worst-case sub-optimality scenarios. This analysis is meant for the qualitative analysis and is limited to 1) small problem instances, for which the scenario-based solution can be retrieved, and to 2) investment problems where uncertainty solely enters the feasible region, e.g., demand and carbon target uncertainty. Consider, for example, a two-stage ($T=2$) instance of problem \eqref{prog:bound_inf_primal} and discretize the random variable $\boldsymbol{\xi}$ using $\Omega$ number of scenarios, i.e., $\xi_{1},\dots,\xi_{\Omega}$. The scenario-based version of \eqref{prog:bound_inf_primal} with the sole uncertainty in the feasible region takes the form:
\begin{subequations}\label{prog:saa_small}
\begin{align}
\minimize{x_{1},x_{2}}\quad& \tfrac{1}{\Omega}\textstyle\sum_{\omega=1}^{\Omega}(c_{1}^{\top}x_{1} + c_{2}^{\top}x_{2\omega})\label{saa_small_obj}\\
\st\quad&A_{1}x_{1}\geqslant B_{1}S_{1}\xi_{\omega},\label{saa_small_con_1}\\
&A_{1}x_{1}+A_{2}x_{2\omega}\geqslant B_{2}S_{2}\xi_{\omega},\;\forall \omega\in\llbracket \Omega\rrbracket,\label{saa_small_con_2}
\end{align}
\end{subequations}
where the objective function computes the sample average investment planning cost, and constraints \eqref{saa_small_con_1} and \eqref{saa_small_con_2} are written for the first and second stages, respectively. Observe, that the first-stage decision $x_{1}$ is uncertainty independent, while the second-stage decisions $x_{21},\dots,x_{2\Omega}$ are scenario-specific. To identify the worst-case sub-optimality gap between the solution of LDR approximation \eqref{prog:bound_apprx_primal} and its scenario-based counterpart \eqref{prog:saa_small}, we put forth the following bilevel program:
\begin{align}
    &\begin{array}{@{}rl@{}}
    \maximize{\hat{\xi}}&
    (c_{1}^{\top}X_{1}^{\star}\hat{\xi} + c_{2}^{\top}X_{2}^{\star}\hat{\xi}) - (c_{1}^{\top}x_{1}^{\star} + c_{2}^{\top}x_{2})
    \\
    \st& \underline{\xi}\leqslant\hat{\xi}\leqslant\overline{\xi},\\
    \end{array}\tag{UL}\label{UL}\\
    &\quad\quad\;\;\begin{array}{@{}rll}
       x_{2}\in\underset{x_{2}}{\text{argmin}}  & c_{1}^{\top}x_{1}^{\star} + c_{2}^{\top}x_{2}\\
       \st& A_{1}x_{1}^{\star}\geqslant B_{1}S_{1}\hat{\xi}\\
       & A_{1}x_{1}^{\star} + A_{2}x_{2}\geqslant B_{2}S_{2}\hat{\xi},
    \end{array}\tag{LL}\label{LL}
\end{align}
where the upper-level problem \eqref{UL} maximizes the distance between the objective function values of the LDR and scenario-based solution by optimizing the planning uncertainty realization $\hat{\xi}$. Here, we assume that planning uncertainty realizations are within the minimum and maximum bounds $\underline{\xi}$ and $\overline{\xi}$, respectively. Observe, that the LDR matrices $X_{1}^{\star}$ and $X_{2}^{\star}$ are fixed to the optimal solution of problem \eqref{prog:bound_apprx_primal} and that the first-stage decision $x_{1}^{\star}$ is fixed to the solution of problem \eqref{prog:saa_small}. The recourse decision $x_{2}$ is computed in the lower-level problem \eqref{LL}, which takes realization $\hat{\xi}$ as input and computes the least-cost recourse solution $x_{2}$. Therefore, using the closed-loop optimization \eqref{UL}--\eqref{LL}, we can identify the worst-case uncertainty realization scenario, which yields the largest gap between the LDR and scenario-based solutions. Then, using the distributional information in \eqref{eq:amb_set}, we can conclude on the likelihood of the worst-case optimality.  

\section{Numerical Tests for the Southeastern U.S.} \label{sec:case_study}
\begin{figure}
    \centering
    \resizebox{0.46\textwidth}{!}{%
    {%
    \setlength{\fboxsep}{0pt}%
    \fbox{
    \includegraphics{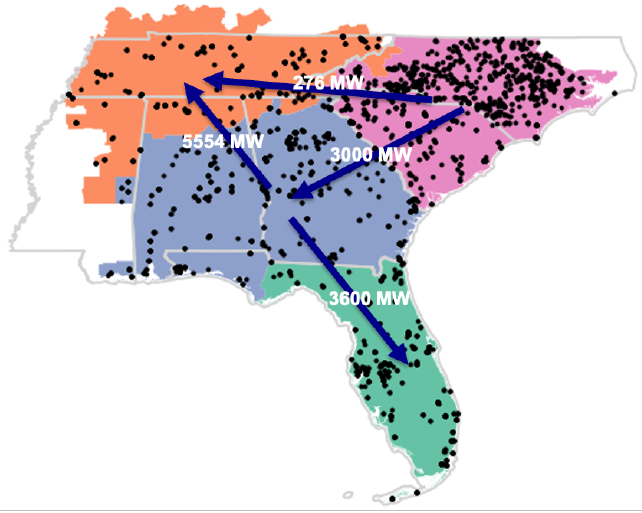}}
    }%
    }
    \textcolor{maincolor}{\caption{Southeastern U.S. layout for carbon-constrained power generation and storage investment planning. The dots depict the nodes of electricity production and generation, then aggregated to four regions as in \cite{schwartz2021role}.}
    \label{fig:my_label}}
\end{figure}

In numerical tests, we use a power system model from \cite{schwartz2021role} resembling seven states in the Southeastern U.S. responsible for $\approx20\%$ of national electric power emissions. The power supply is from regulated utilities, which makes this region suitable for centralized emission-oriented investment planning. In this line, we consider a 25-year long planning horizon until 2050 with five investment stages from 2025 to 2045, modeling five years of operations in between. The full problem data is available in the e-companion \cite{dvorkin2022}; in short, \textcolor{maincolor}{operations are modeled using $14$ operating horizons per year with $24$ representative hours in each}. The current generation mix includes 31 generator in $4$ aggregation zones interconnected by $4$ transmission lines. We optimize investments into 36 generation units, including renewable, nuclear, and gas-fired power generation (with{} and without carbon capture technology), and investments into $4$ utility-scale lithium-ion battery systems, one per zone. We consider that the peak demand, investment costs, and fuel prices are uncertain from 2030 onward, i.e., for a vector of random variables $\boldsymbol{\xi}\in\mathbb{R}^{n}$, we have $n=\sum_{t=1}^{5} n_{t}=13$, with $n_{1}=1$ and $n_{t}=3, \forall t=2,\dots,5$. The ambiguity set $\mathcal{P}$ in \eqref{eq:amb_set} contains any distribution with the first and second moments $\mu=\mathbb{1}$ and $\Sigma=\text{diag}[\sigma^2\cdot\mathbb{1}]$, respectively, with variance $\sigma^2$ uniformly set to 0.25. Figure \ref{fig:uncertainty_data} depicts the resulting planning uncertainty for normally distributed $\boldsymbol{\xi}$. The goal is to accommodate this uncertainty while meeting the CO$_2$ emission targets from Tab. \ref{tab:em_goal}. To guarantee feasibility, we set violation probabilities to $\overline{\varepsilon}^{\text{f}}=12.5\%$ for power flows, $\overline{\varepsilon}^{\text{g}}=\overline{\varepsilon}^{\text{r}}=1\%$ for generator limits,  $\overline{\varepsilon}^{\text{s}}=4\%$ for energy storage limits, $\varepsilon^{\text{e}}=20\%$ for annual CO$_2$ limits, and $\overline{\varepsilon}^{\text{i}}=5.0\%$ for investment limits, unless stated otherwise. 

Table \ref{tab:complexity} collects the resulting dimensions of the deterministic, LDR and scenario approximation problems. We run numerical tests using the JuMP optimization package in Julia language \cite{dunning2017jump}, Mosek optimization solver, and MIT SuperCloud high performance computational (HPC) environment \cite{reuther2018interactive}. \textcolor{maincolor}{Observe, the dimension of the primal and dual LDR approximations, which grow linearly in $\boldsymbol{\xi}$, is by several orders of magnitude smaller than the scenario-based counterpart, which grows exponentially in $\boldsymbol{\xi}$. As a result, if the primal and dual LDR approximations are solved in less than one hour each, the solution to the scenario approximation is not retrieved within the allocated 48-hour running time on HPC.} To replicate the models, we refer to the data and code in \cite{dvorkin2022}.

\begin{figure}
\centering
\includegraphics[width=0.5\textwidth]{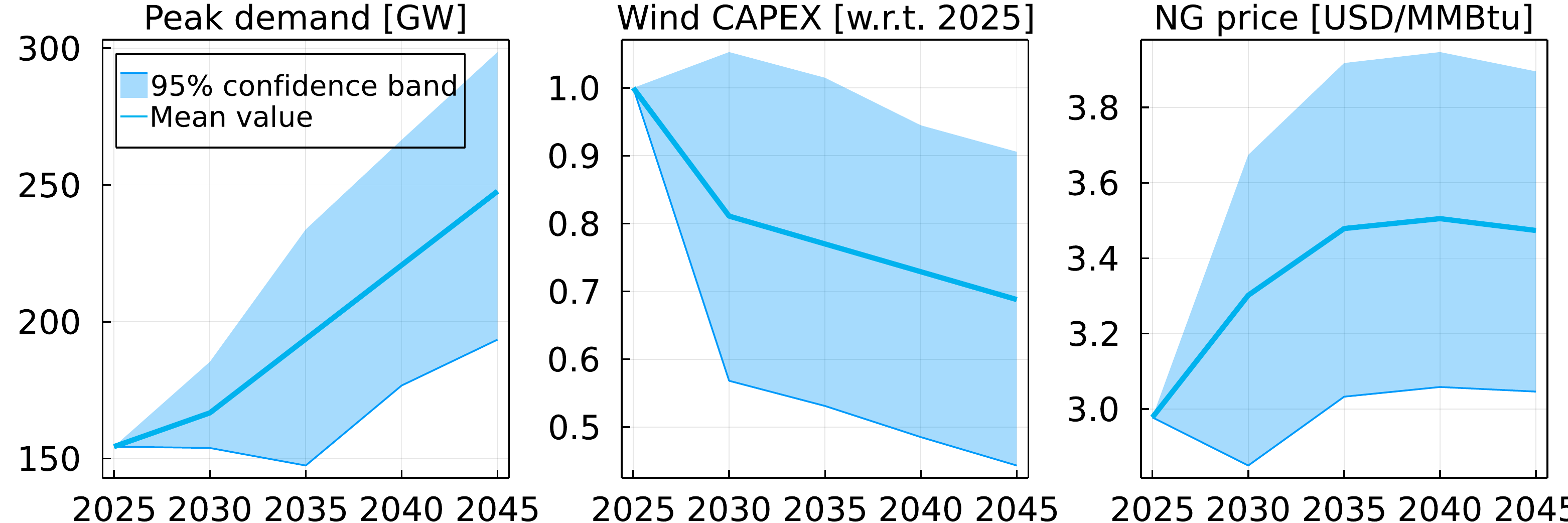}
\caption{Uncertainty of selected planning parameters:
demand data is from NREL electrification studies \cite{murphy2021electrification}, wind CAPEX data is from NREL ATB 2021 \cite{vimmerstedt2021annual}, and gas prices are from the annual IEA energy outlook 2021 \cite{iea2021}.}
\label{fig:uncertainty_data}
\end{figure}

\begin{table}
\caption{Annual emission limit}
\label{tab:em_goal}
\centering
\begin{threeparttable}
\begin{tabular}{c|c|ccccc}
\toprule
Year &
2020\tnote{*}&
2025&
2030&{}
2035&
2040&
2045\\
\midrule
$\overline{e}_{t}$, Mt&
325&150 & 125 & 100 & 75 & 50\\
\bottomrule
\end{tabular}
\begin{tablenotes}\footnotesize
\item[*]Estimated emission level in the Southeastern U.S. \cite{schwartz2021role}
\end{tablenotes}
\end{threeparttable}
\end{table}

\begin{table}
\caption{Investment problems' dimension \textcolor{maincolor}{and CPU time}}
\label{tab:complexity}
\centering
\begin{threeparttable}
\def\arraystretch{1.15}
\begin{tabular}{l|llll}
\toprule
Model & 
Det.&
LDR--P&
LDR--D&
Scen. approx.\tnote{$\star$}
\\
\midrule
Type & LP\tnote{\dag} & SOCP\tnote{\ddag} & SOCP\tnote{\S} & LP\\
\# variables & $132,940$ & $8,130,218$ & $9,861,207$ & $1,130$ mil.\\
\# constraints & $377,071$ & $7,036,498$ & $7,153,874$ & $11,277$ bil.\\
\multicolumn{1}{r|}{-- linear} & $377,071$ & $6,541,451$ & $6,642,883$ & $11,277$ bil.\\
\multicolumn{1}{r|}{-- conic} & $-$ & $495,047$ & $510,991$ & $-$\\
\textcolor{maincolor}{CPU time, sec.} & \textcolor{maincolor}{13.4} & \textcolor{maincolor}{885.6} & \textcolor{maincolor}{3,318.6} & \textcolor{maincolor}{$\infty$}\\
\bottomrule
\end{tabular}
\begin{tablenotes}\footnotesize
\item[]\tnote{\dag}LP -- linear program, \tnote{\ddag}SOCP -- second-order cone program
\item[]\tnote{\S}LDR-P/LDR-D -- Primal/Dual LDR approximation
\item[]\tnote{$\star$}Using 10 uncertainty scenarios (branches) per stage
\end{tablenotes}
\end{threeparttable}

\end{table}

\subsection{Investment Results}

First, we compare the deterministic investment plan and its stochastic counterparts in Tab. \ref{tab:summary} using the following metrics:
\begin{itemize}
    \item In-sample cost -- the total optimization planning cost as the optimal value of functions \eqref{eq_obj} and \eqref{prog:sto_obj} (line $\star$).
    \item The mean out-of-sample (O--of--S) cost (lines $\diamond$) across $1,000$ scenarios drawn from four different distributions (Normal, Uniform, Logistic and Laplace) that match the mean and covariance of set $\mathcal{P}$. For each scenario, the cost is obtained using re-optimization similar to problem \eqref{prog:base}, where the investment plan is fixed, e.g., to $\overline{y}_{t}^{\star}$ for deterministic solution and to $\overline{Y}_{t}^{\star}S_{t}\hat{\xi}$ for stochastic solution, where $\hat{\xi}$ is an uncertainty realization scenario, and where planning parameters are adjusted according to $\hat{\xi}$. If the investment plan fails to satisfy a particular demand realization, we model a variable load shedding that incurs an additional cost of $\$9,000/$MWh.  
    \item The frequency and the mean magnitude of load shedding across the investment horizon (lines $\triangleleft$), by stress-testing investment plans on $1,000$ out-of-sample scenarios.
    \item Dissimilarity in the 1$^{\text{st}}-$stage deterministic and stochastic decisions using $\ell_{1}-$norm, i.e., $\norm{\overline{y}_{1} - \overline{Y}_{1}S_{1}\mathbb{1}}_{1}$ for generation, and similarly for storage investments (lines $\bullet$).
    \item The standard deviation of the investment plan (lines $\ast$).
\end{itemize}

\bgroup
\def\arraystretch{1.15}
\begin{table*}[t]
\caption{Summary of deterministic and stochastic planning under Normal (Norm. asm.) and distributionally robust (DRO) assumptions.}
\label{tab:summary}
\centering
\begin{tabular}{lllrrrrrrr}
\toprule
\multicolumn{2}{c}{\multirow{4}{*}{Parameter}} & \multirow{4}{*}{Unit} & \multicolumn{1}{c}{\multirow{4}{*}{\begin{tabular}[c]{@{}c@{}}Deterministic\\ solution\end{tabular}}} & \multicolumn{6}{c}{Stochastic chance-constrained  LDR optimization} \\
\cmidrule(lr){5-10}
\multicolumn{2}{c}{} &  & \multicolumn{1}{c}{} & \multicolumn{2}{c}{\multirow{2}{*}{Base LDR solution}} & \multicolumn{4}{c}{Quasi-deterministic solution} \\
\cmidrule(lr){7-10}
\multicolumn{2}{c}{} &  & \multicolumn{1}{c}{} & \multicolumn{2}{c}{} & \multicolumn{2}{c}{$\alpha^{\overline{y}}=\alpha^{\overline{\vartheta}}=\alpha^{\overline{\varphi}}=10\%$} & \multicolumn{2}{c}{$\alpha^{\overline{y}}=\alpha^{\overline{\vartheta}}=\alpha^{\overline{\varphi}}=0.1\%$} \\
\cmidrule(lr){5-6}\cmidrule(lr){7-8} \cmidrule(lr){9-10}
\multicolumn{2}{c}{} &  & \multicolumn{1}{c}{} & \multicolumn{1}{c}{Norm. asm.} & \multicolumn{1}{c}{DRO} & \multicolumn{1}{c}{Norm. asm.} & \multicolumn{1}{c}{DRO} & \multicolumn{1}{c}{Norm. asm.} & \multicolumn{1}{c}{DRO} \\
\midrule
\multicolumn{2}{l}{$\star$ In-sample cost} & bil. USD & 526.2 & 529.6 & 635.1 & 551.5 & 685.5 & 578.1 & 895.6 \\
\midrule
\multirow{4}{*}{\rotatebox{90}{\begin{tabular}[c]{@{}c@{}}O--of--S \\ cost\end{tabular}}} & ${\diamond}$ Normal & \multirow{4}{*}{bil. USD} & 3,317 & 535.5 & 604.8 & 741.1 & 639.7 & 802.7 & 795.4 \\
 & ${\diamond}$ Uniform &  & 3,333 & 532.0 & 604.8 & 698.7 & 638.2 & 752.0 & 792.8 \\
 & ${\diamond}$ Logistic &  & 3,369 & 536.2 & 609.1 & 772.5 & 641.1 & 885.9 & 795.7 \\
 & ${\diamond}$ Laplace &  & 3,365 & 542.0 & 609.8 & 794.3 & 642.0 & 956.4 & 791.7 \\
\midrule
\multirow{4}{*}{\rotatebox{90}{\begin{tabular}[c]{@{}c@{}}Load shed. \\ freq. (mag.)  \end{tabular}}} & $\triangleleft$ Normal & \multirow{4}{*}{\% (GWh)} & 76.9 (4,560) & 51.8 (17.8) & 0.1 (0.0) & 31.9 (408) & 0.1 (0.0) & 28.8 (442) & 0.0 (0.0) \\
 & $\triangleleft$ Uniform &  & 78.7 (5,116) & 61.3 (7.6) & 0.0 (0.0) & 34.8 (344) & 0.0 (0.0) & 31.7 (377) & 0.0 (0.0) \\
 & $\triangleleft$ Logistic &  & 79.3 (4,601) & 47.9 (17.1) & 0.1 (0.0) & 29.5 (434) & 0.2 (0.0) & 24.4 (559) & 0.1 (0.6) \\
 & $\triangleleft$ Laplace &  & 79.3 (4,404) & 38.6 (30.7) & 0.7 (1.8) & 26.6 (478) & 0.4 (4.0) & 23.9 (656) & 0.1 (0.5) \\[.25em]
\midrule
\multicolumn{2}{l}{$\bullet$ Dissimilarity $ \overline{Y}_{1}$} & GW & 0.0 & 0.8 & 26.1 & 3.6 & 28.0 & 6.4 & 36.7 \\
\multicolumn{2}{l}{$\bullet$ Dissimilarity $ \overline{\Theta}_{1}$} & GWh & 0.0 & 0.0 & 18.9 & 0.0 & 17.7 & 0.0 & 7.7 \\
\midrule
\multicolumn{2}{l}{$\ast$ $\Sigma_{i,t}\text{Std}[\overline{y}_{ti}(\xi)]$} & GW & 0.0 & 106.8 & 68.0 & 34.5 & 45.7 & 0.35 & 0.55 \\
\multicolumn{2}{l}{$\ast$ $\Sigma_{i,t}\text{Std}[\overline{\vartheta}_{ti}(\xi)]$} & GWh & 0.0 & 28.7 & 17.8 & 5.6 & 10.5 & 0.05 & 0.24 \\
\multicolumn{2}{l}{$\ast$ $\Sigma_{i,t}\text{Std}[\overline{\varphi}_{ti}(\xi)]$} & GW & 0.0 & 11.6 & 13.0 & 2.4 & 8.5 & 0.02 & 0.44 \\
\bottomrule
\end{tabular}
\end{table*}
\egroup

The cost data in Tab. \ref{tab:summary} reveals the impact of modeling assumptions on the multi-stage investments performance. The in-sample cost of the deterministic solution (3$^{\text{rd}}$ column) is the lowest as it ignores the planning uncertainty, while the stochastic optimization (4$^{\text{th}}$ and 5$^{\text{th}}$ columns) demonstrates larger in-sample costs that additionally include the cost of uncertainty. Under the normal distributional assumption\footnote{
This assumption leverages a common and less conservative reformulation, where the rules are optimized using a single-sided reformulation of chance constraints, as in \eqref{eq:single_sided_ref}, but the safety factor $\tilde{\varepsilon}$ amounts to the inverse of the the inverse CDF of the standard Normal distribution at $(1-\varepsilon)-$quantile, which is smaller than distributionally robust requirement $\sqrt{(1-\varepsilon)/\varepsilon}$. 
}, the cost of uncertainty is smaller than in the distributionally robust case, which optimizes the planning cost for the worst-case distribution (rather than for the normal one). On the other hand, by factoring in the planning uncertainty, the stochastic investment plans outperform the deterministic plan in terms of the out-of-sample cost and the load shedding frequency and magnitude: the deterministic out-of-sample cost exceeds the in-sample one by an order of magnitude due to frequent and large load shedding. However, the out-of-sample statistics under the normal assumption are not as good as those in the distributionally robust planning. If for the two-stage optimization, the normal assumption fairs well in practice \cite{bienstock2014chance}, in the multi-stage optimization, where the number of constraints grows in investment stages, it may not be the case. The distributionally robust solution, however, improves the out-of-sample performance of the stochastic multi-stage plan.

\begin{figure*}[t]
\centering
\resizebox{0.95\textwidth}{!}{%
\includegraphics[width=0.48\textwidth]{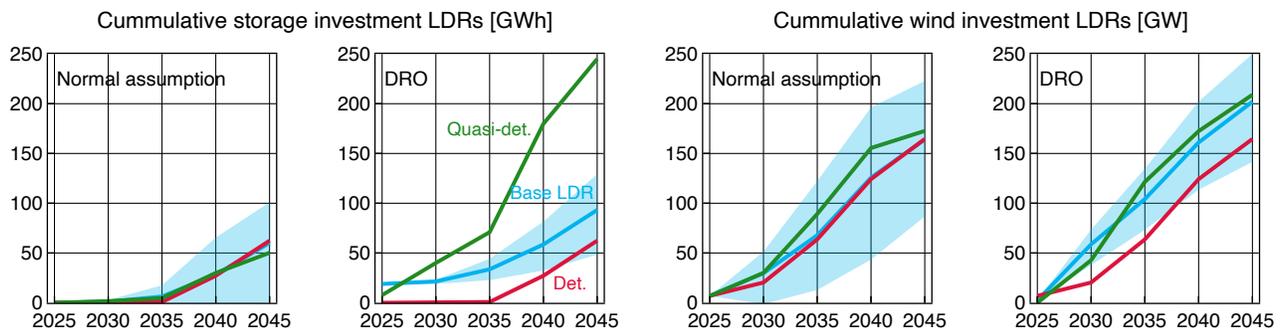}
}
\caption{Cumulative storage energy capacity (left plots) and wind generation (right plots)  LDR-guided investments under different distributional assumptions. The blue envelopes depict the 95\% confidence band of the base stochastic investment portfolio. The quasi-deterministic solution is given for $\alpha^{\mydot}=0.1\%$.}
\label{fig:ldr_plots}
\end{figure*}

The dynamic investments into energy storage and wind generation are illustrated in Fig. \ref{fig:ldr_plots}. Observe how uncertainty and distributional assumptions affect the $1^{\text{st}}-$stage investments (also through lines $\bullet$ of Tab. \ref{tab:summary}), and how the base LDR solution accumulates large variance of the stochastic investment portfolio. For example, the spread of wind investment decisions already at the $2^{\text{nd}}-$stage is $\approx$50 GW,  regardless of distributional assumptions. To reduce investment stochasticity, consider the variance-constrained optimization from Section \ref{sec:inv_var_red_and_det_eq} and with results  in $7^{\text{th}}$ to $10^{\text{th}}$ columns in Tab. \ref{tab:summary}. Observe, that the variance reduction (or standard deviation in lines $\ast$) comes at the expense of the increasing in-sample planning cost (line $\star$), thus establishing the trade-off between the cost and stochasticity of the investment plan. Notably, the variance reduction systematically reduces the load shedding frequency (6$^{\text{th}}$ and 7$^{\text{th}}$ columns) as opposed to the base LDR solution (4$^{\text{th}}$ column), but it does not necessarily reduce its magnitude (recall, chance constraints control the frequency of violations, not their magnitudes). As a result, the out-of-sample costs are substantially higher than the in-sample costs under the Normal assumption. The DRO optimization, on the other hand, systematically demonstrates both a small frequency and a small magnitude of load shedding. With the smallest coefficients $\alpha^{\mydot}=0.1\%$, the variance-constrained LDR optimization returns a quasi-deterministic investment plan, also depicted in Fig. \ref{fig:ldr_plots} in green. The quasi-deterministic investment plan is insensitive to uncertainty realizations, but unlike the deterministic solution, it identifies that minimal deterministic level of investments that guarantees the prescribed operational and policy constraint satisfaction, hence resulting in more robust out-of-sample performance. 

\begin{figure*}[t]
\centering
\resizebox{1\textwidth}{!}{%
\includegraphics[width=0.48\textwidth]{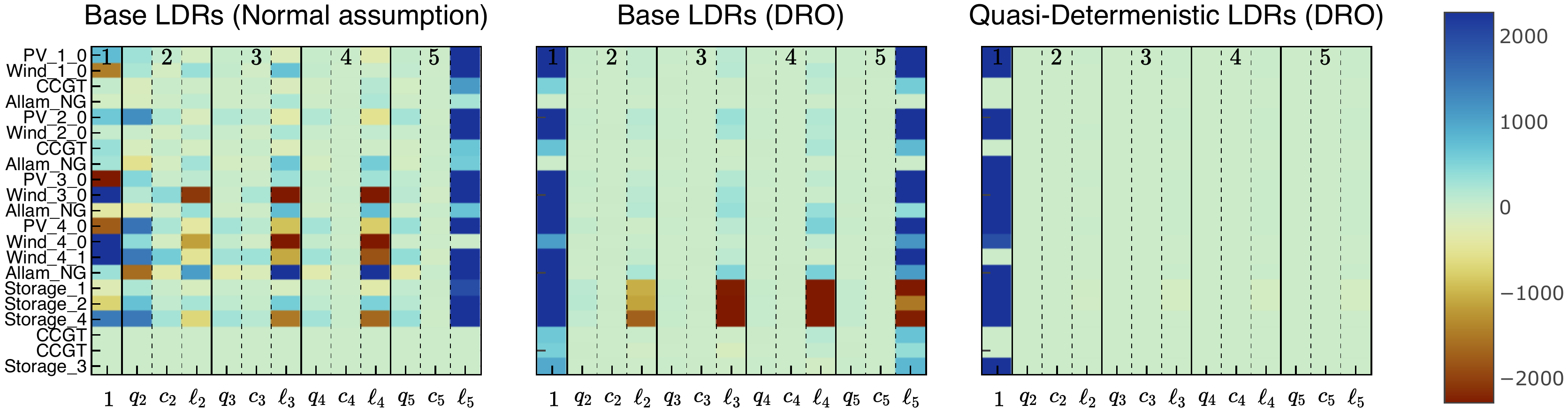}
}
\caption{Visualization of the optimized, non-zero entries of generation ($\overline{Y}_{5}$) and storage energy capacity ($\overline{\Theta}_{5}$) LDRs at the terminal $5^{\text{th}}$ stage (2045) under different assumptions. The color bar depicts the sensitivity (in MW for generation and MWh for storage energy capacity) of investments to the realizations of 12 random variables modeling CAPEX $(q_{t})$, fuel cost $(c_{t})$ and peak load $(\ell_{t})$ uncertainties at the preceding and current investment stages.}
\label{fig:ldr_matrices}
\end{figure*}

To illustrate the discrepancies in stochastic investment optimization, Fig. \ref{fig:ldr_matrices} displays the sensitivities of the optimal investments to the CAPEX, fuel price, and peak load uncertainties under the normal assumption, distributional robustness, and in the quasi-deterministic case. With larger conservatism (in terms of the in-sample costs, line $\star$ in Tab. \ref{tab:summary}), the optimal investments become less sensitive to the uncertain realizations of planning parameters. Notably, sensitivities to CAPEX and fuel prices are substantially smaller than the sensitivity to the peak demand realizations. The DRO solution is less sensitive due to a substantially larger safety factor $\tilde{\varepsilon}$ in \eqref{eq:single_sided_ref} than that under the normal assumption, which more substantially reduces the feasible space of the investment LDRs. The almost zero sensitivity of the quasi-deterministic solution is due to additional constraints in \eqref{prog:var_con} on the LDR standard deviation.   

Last, with Fig. \ref{tab:emission}, we illustrate the ability of the chance-constrained LDR optimization to trade-off between the investment feasibility (here, carbon policy feasibility) and the expected planning cost. On the left plot, the LDRs are optimized in a distributionally robust fashion to maintain at least $1-\varepsilon^{\text{e}}=80\%$ of the annual emission probability mass below the target emission level of 50 Mt. By increasing this requirement to 99\%, the model's conservatism substantially increases and the probability mass shifts to the left from the target level, increasing the expected cost from 635.1 to 669.9 bil. USD. Such a drastic shift is due to the model's robustness against any possible distributional shape matching the given mean $\mu$ and covariance $\Sigma$, including the long-tail distributions. 

\begin{figure}
\centering
\includegraphics[width=0.48\textwidth]{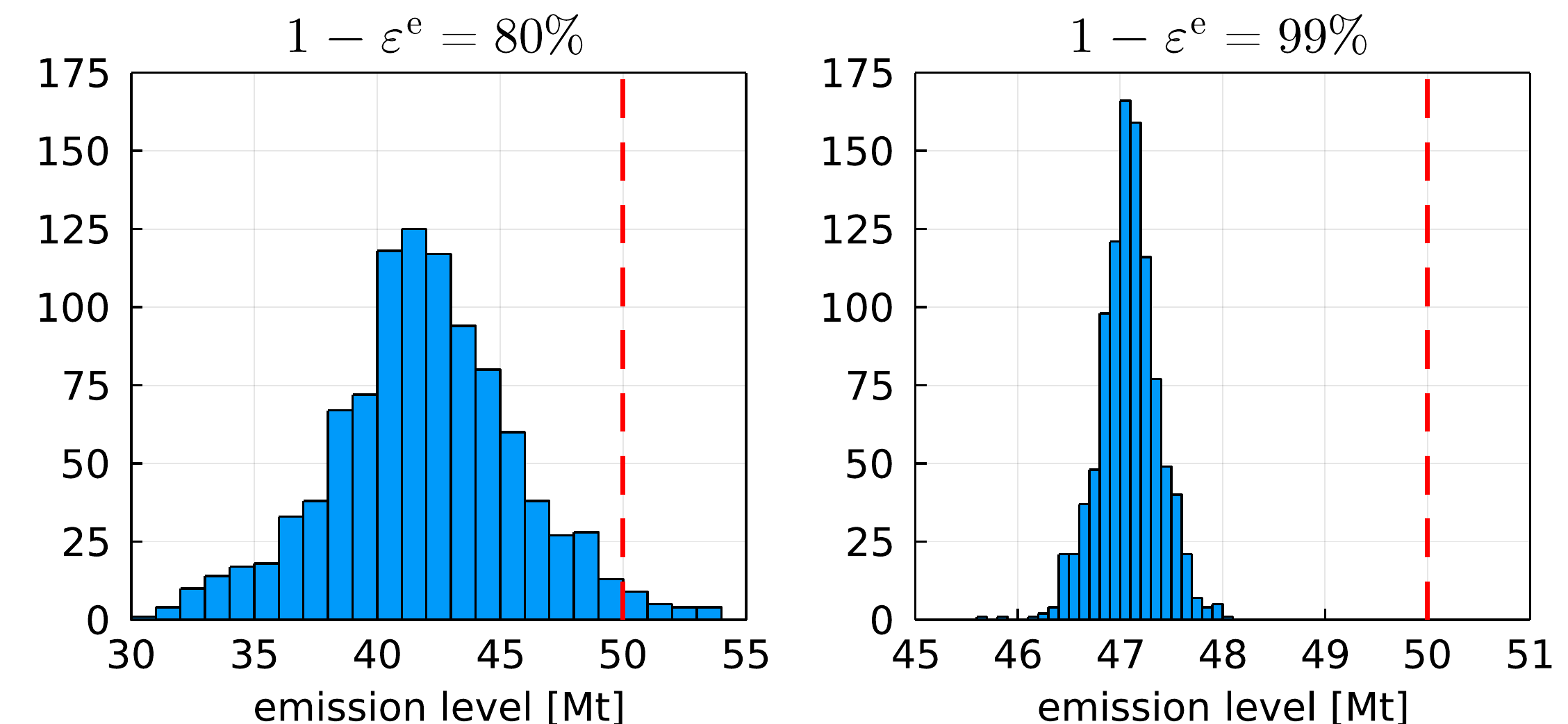}

\vspace{0.25cm}
\resizebox{0.45\textwidth}{!}{%
\begin{tabular}{l|ccccc}
\toprule
Probability $1-\varepsilon^{\text{e}},\%$&
80&
90&
95&
99&
99.9\\
\midrule
In-sample cost, bil. USD&
635.1&
640.8&
647.9&
668.3&
669.9\\
\bottomrule
\end{tabular}
}
\vspace{0.25cm}
\caption{Trade-offs between the CO$_2$ policy constraint satisfaction and the in-sample planning costs under varying risk tolerances in distributionally robust optimization. The plots display distributions of the CO$_2$ emission level in 2045 for $\varepsilon^{\text{e}}=0.2$ and $\varepsilon^{\text{e}}=0.01$ using $10^3$ scenarios from Laplace distribution, where the vertical dashed lines depict the CO$_2$ emission limit for 2045.}
\label{tab:emission}
\end{figure}

\subsection{Analysis of the LDR Sub-Optimality}
\begin{table*}[t]
\caption{Global LDR sub-optimality bounds for varying constraint violation probabilities and distributional assumptions}
\label{tab:gap}
\centering
\def\arraystretch{1.15}
\begin{tabular}{ll|lllll|lllll}
\toprule
\multicolumn{2}{l}{Distributional assumption} & \multicolumn{5}{c}{Normal} & \multicolumn{5}{c}{DRO} \\
\cmidrule(lr){3-7}\cmidrule(lr){8-12}
\multicolumn{2}{l}{Variance of $\boldsymbol{\xi}$, $\sigma^2$} & \multicolumn{1}{c}{0.05} & \multicolumn{1}{c}{0.10} & \multicolumn{1}{c}{0.15} & \multicolumn{1}{c}{0.20} & \multicolumn{1}{c}{0.25} & \multicolumn{1}{c}{0.05} & \multicolumn{1}{c}{0.10} & \multicolumn{1}{c}{0.15} & \multicolumn{1}{c}{0.20} & \multicolumn{1}{c}{0.25} \\
\midrule
\multicolumn{12}{c}{Theoretical constraint violation probability $\hat{\varepsilon}= 10\%$} \\
\midrule
Prim. objective $\overline{\text{P}}(X^{\star})$ & bil. USD & 526.3 & 526.4 & 526.6 & 526.7 & 526.8 & 564 & 565.7 & 567.8 & 570.4 & 573.8 \\
Dual\textcolor{white}. objective $\overline{\text{D}}(\Lambda^{\star})$ & bil. USD & 525.7 & 524.7 & 524.1 & 523 & 522.1 & 521 & 513.4 & 510.7 & 506.2 & 501.7 \\
Absolute difference & bil. USD & 0.6 & 1.69 & 2.5 & 3.7 & 4.7 & 43 & 52.3 & 57.1 & 64.2 & 72.1 \\
Percentage difference & \% & 0.1 & 0.3 & 0.5 & 0.7 & 0.9 & 7.6 & 9.3 & 10.1 & 11.3 & 12.6 \\
\midrule
\multicolumn{12}{c}{Theoretical constraint violation probability $\hat{\varepsilon}= 5\%$} \\
\midrule
Prim. objective $\overline{\text{P}}(X^{\star})$ & bil. USD & 526.5 & 526.9 & 527.4 & 527.8 & 528.2 & 566 & 571.2 & 579.5 & 589.3 & 598.7 \\
Dual\textcolor{white}. objective $\overline{\text{D}}(\Lambda^{\star})$ & bil. USD & 525.5 & 523.5 & 521.8 & 520.1 & 518.5 & 510.5 & 505.1 & 495.7 & 486.5 & 477.2 \\
Absolute difference & bil. USD & 1 & 3.4 & 5.6 & 7.7 & 9.7 & 55.5 & 66.1 & 83.8 & 102.8 & 121.5 \\
Percentage difference & \% & 0.2 & 0.7 & 1.1 & 1.5 & 1.8 & 9.8 & 11.6 & 14.5 & 17.4 & 20.3 \\
\midrule
\multicolumn{12}{c}{Theoretical constraint violation probability $\hat{\varepsilon}= 2.5\%$} \\
\midrule
Prim. objective $\overline{\text{P}}(X^{\star})$ & bil. USD & 526.8 & 527.4 & 528.1 & 528.8 & 529.6 & 571.5 & 590.3 & 609.3 & 626.8 & 643.3 \\
Dual\textcolor{white}. objective $\overline{\text{D}}(\Lambda^{\star})$ & bil. USD & 522.6 & 521.9 & 516.9 & 517.3 & 510.3 & 504.6 & 485.5 & 465 & 429.2 & 401.2 \\
Absolute difference & bil. USD & 4.2 & 5.5 & 11.2 & 11.5 & 19.3 & 66.9 & 104.8 & 144.3 & 197.6 & 242.1 \\
Percentage difference & \% & 0.8 & 1.0 & 2.1 & 2.2 & 3.6 & 11.7 & 17.8 & 23.7 & 31.5 & 37.6\\
\bottomrule
\end{tabular}
\end{table*}

We now analyze the sub-optimality of the investment LDRs using the two methodologies from Section \ref{sec:sub-optimality_gurantees}. First, we solve the primal and dual LDR approximations \eqref{prog:bound_apprx_primal} and \eqref{prog:bound_apprx_dual}, respectively. Then, according to Theorem \ref{th:gap}, we obtain the global sub-optimality bound on investment LDRs by taking the difference between the optimal primal and dual objective function values. We run this experiment for the LDR optimization under the normal and distributionally robust assumptions, using varying constraint violation probability $\hat{\varepsilon}$ and varying distribution variance $\sigma^2$, and report the results in Tab. \ref{tab:gap}. 
Observe, that the sub-optimality bound growths as the distribution variance increases. However, as distributional assumptions and violation probabilities are prescribed inputs to the LDR optimization, one obtains a tighter sub-optimality bound (guarantee) by changing the risk appetite, i.e., by increasing input parameter $\hat{\varepsilon}$. Notably, the absolute sub-optimality in the bil. USD  remains substantially smaller than the difference between the out-of-sample costs of the deterministic and stochastic solutions (see Table \ref{tab:summary}), meaning that even though the LDR is sub-optimal, the system planning is still better off with the LDR solution compared to the uncertainty-agnostic, deterministic solution. 

While the global sub-optimality bound guarantees the average LDR performance (i.e., expected optimality loss), the actual sub-optimality will depend on the uncertainty realization scenario. To identify the likelihood of the worst-case sub-optimality scenario, we solve a scenario-based stochastic program \eqref{prog:saa_small} for a reduced problem size, which includes two investment stages 2025 and 2045, models the 2$^{\text{nd}}$-stage peak load uncertainty $\ell_{2}(\xi)$ only, disregards energy storage expansion, and reduces the number of operating horizons from 14 to 5. Using 50 uncertainty load scenarios from a Normal distribution, the $1^{\text{st}}-$stage investment solution is retrieved in less than 3 hours. We then use this solution to solve a bilevel problem \eqref{UL}--\eqref{LL}, where the minimum and maximum support bounds, $\underline{\xi}$ and $\overline{\xi}$ respectively, on the worst-case scenario $\hat{\xi}$ are obtained from sampling 1000 scenarios from the normal distribution of $\boldsymbol{\xi}$. The results for the various standard deviations of load uncertainty are reported in Tab. \ref{tab:wc_scenario}. Observe, that as the standard deviation increases, the minimum and maximum bounds also increase. The optimal load uncertainty scenarios $\hat{\xi}^{\star}$, i.e., $\ell_{2}(\hat{\xi}^{\star})$, turn out to lie at the boundary of the distribution support. Thus, we conclude that the likelihood of the worst-case LDR sub-optimality is very small, as such scenarios pertain to distribution tails. 

\begin{table}[]
\centering
\caption{Results of the worst-case sub-optimality scenario learning}
\label{tab:wc_scenario}
\setlength\tabcolsep{4pt}
\def\arraystretch{1.15}
\begin{tabular}{l|lllllll}
\toprule
\multicolumn{2}{l}{\begin{tabular}[c]{@{}l@{}}2$^{\text{nd}}$-stage peak load \\ standard deviation\end{tabular}} & GW & 0.0 & 9.8 & 21.4 & 29.8 & 34.9 \\
\midrule
\multicolumn{2}{l}{Minimum\hspace{0.15em} peak load} & GW & 248.4 & 217.8 & 188.9 & 151.8 & 122.0 \\
\multicolumn{2}{l}{Maximum peak load} & GW & 248.4 & 288.5 & 324.0 & 353.2 & 336.9 \\
\multirow{2}{*}{Worst-case} & Scenario & GW & 248.4 & 288.5 & 324.0 & 353.1 & 336.9 \\
 & Opt. loss & bil. USD & 0.0 &  0.7 & 11.3 & 24.9 & 52.4 \\
\bottomrule
\end{tabular}
\end{table}

\section{Conclusions} \label{sec:conclusions}

We revisited the application of multi-stage linear decision rules to power system expansion problems and proposed a novel chance-constrained optimization with performance guarantees. Using a realistic model of the U.S. Southeast’s power system, we demonstrated decarbonization-oriented investment planning under uncertainty for several decades ahead. The chance-constrained formulation enabled the robustness of investments to distributional ambiguity, while offering the trade-offs between the cost of investments and their operational and policy feasibility. Moreover, we improved on the standard deterministic power system planning by offering a quasi-deterministic planning model, which identifies the necessary corrections to the fixed investment plans to immunize investment decisions and future system operations against uncertainty. We improved on previous studies by certifying the economic efficiency of the investment LDRs through the global sub-optimality bound and by studying the likelihood of the worst-case sub-optimality realization scenarios.

\bibliography{references.bib}
\bibliographystyle{IEEEtran}

\begin{IEEEbiography}[{\includegraphics[width=1in,height=1.25in,clip,keepaspectratio]{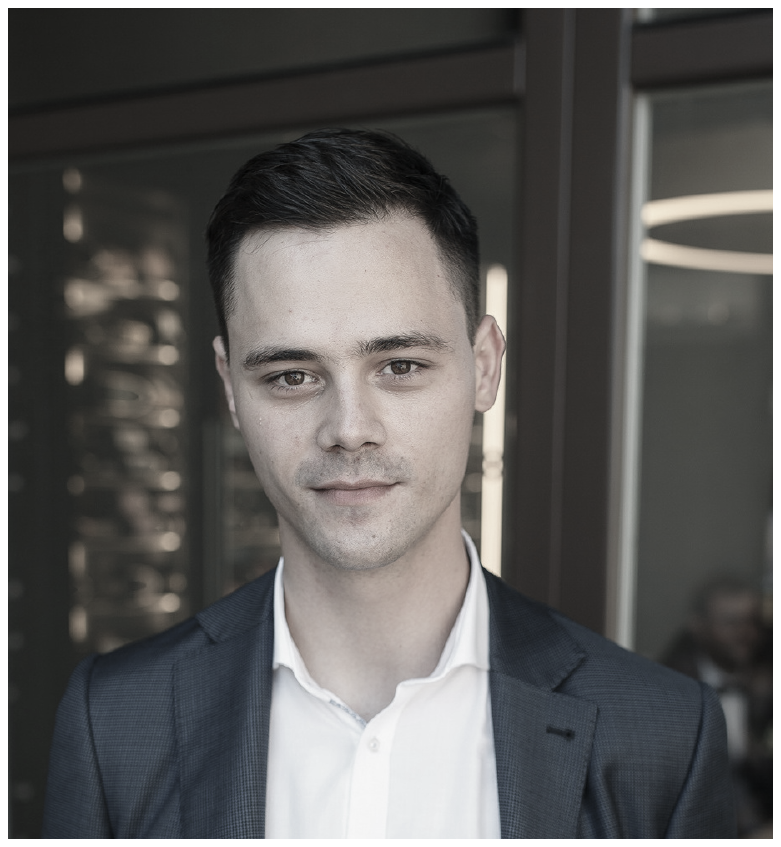}}]{Vladimir Dvorkin Jr.} (S’18, M'21) is a postdoctoral fellow at the Massachusetts Institute of Technology (MIT) affiliated with the Energy Initiative and Laboratory for Information \& Decision Systems. Before joining MIT, he earned a PhD degree in electrical engineering at the Technical University of Denmark in 2021, and was a visiting scholar at the School of Industrial and Systems Engineering of Georgia Tech in 2019. He studies the energy transition through the lens of decision-making under uncertainty, energy economics, and algorithmic privacy. 
His work is recognized by several prestigious awards, including the Marie Skłodowska-Curie Actions \& Iberdrola Group postdoctoral fellowship and the IEEE Transactions on Power Systems Best Paper Award.
\end{IEEEbiography}

\begin{IEEEbiography}[{\includegraphics[width=1in,height=1.25in,clip,keepaspectratio]{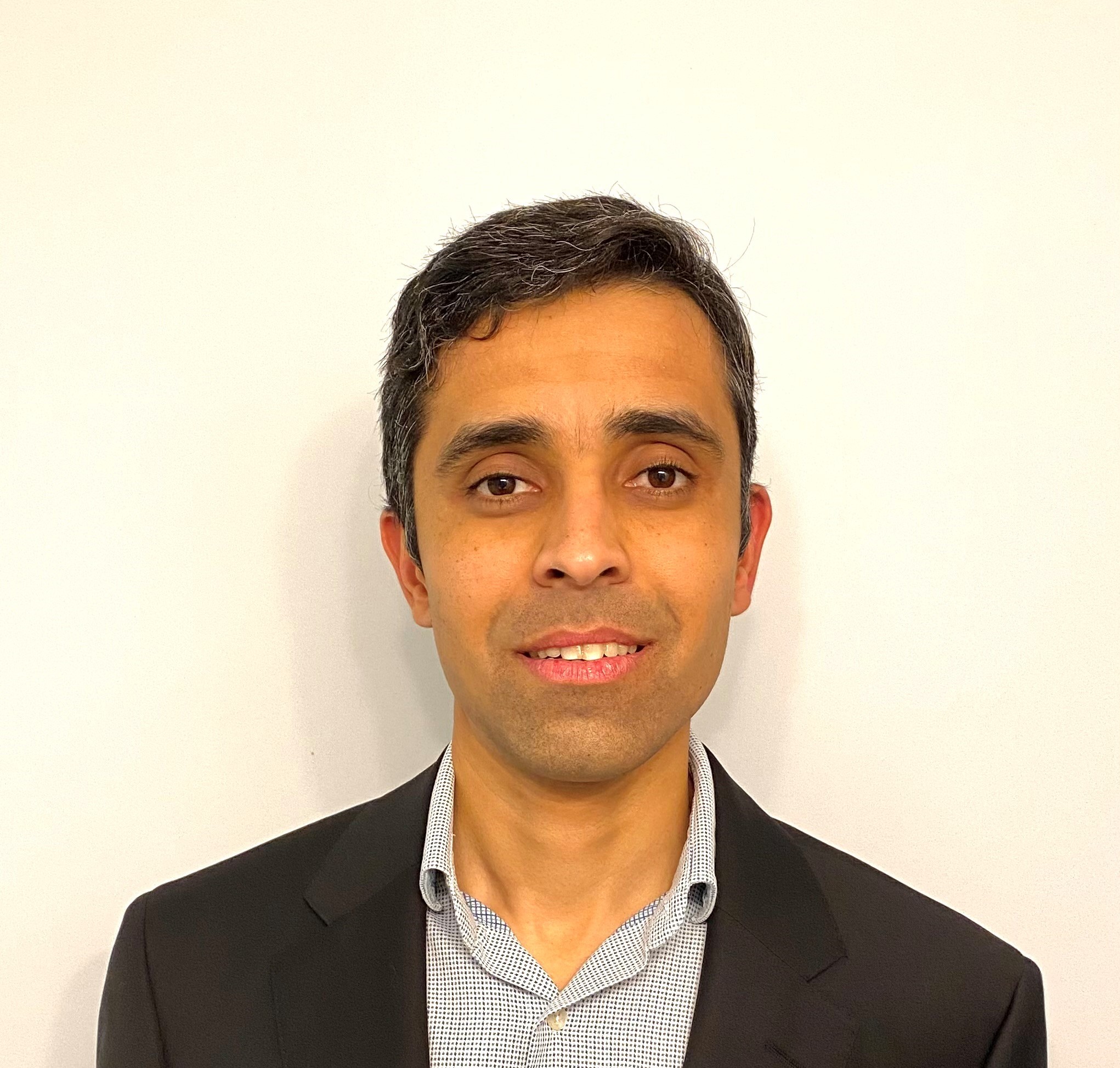}}]{Dharik Mallapragada} is a Principal Research Scientist at the MIT Energy Initiative (MITEI), where he leads the Sustainable Energy Transitions Group. Dr. Mallapragada’s research focuses on planning and operating resilient, low-carbon energy systems as well as conceptualization, design and integration of emerging energy technologies. At MIT, he has pursued research in these topics while securing funding from government, industry and philanthropic sources and establishing collaboration with multiple principal investigators across MIT and other institutions. Prior to MIT, Dr. Mallapragada spent nearly five years in the energy industry working on a range of sustainability-focused research topics. He recently served as a member of the Massachusetts Commission on Clean Heat, and serves on the advisory committee for the Open Energy Outlook project, a multi-institution effort to create open-source energy systems models and datasets. He also co-leads systems thrust activities at the Center for Decarbonizing Chemical Manufacturing using Sustainable electrification (DC-MUSE). Dr. Mallapragada holds a M.S. and Ph.D. in Chemical Engineering from Purdue University and a B.Tech. in Chemical Engineering from the Indian Institute of Technology, Madras, India. 
\end{IEEEbiography}

\begin{IEEEbiography}[{\includegraphics[width=1in,height=1.25in,clip,keepaspectratio]{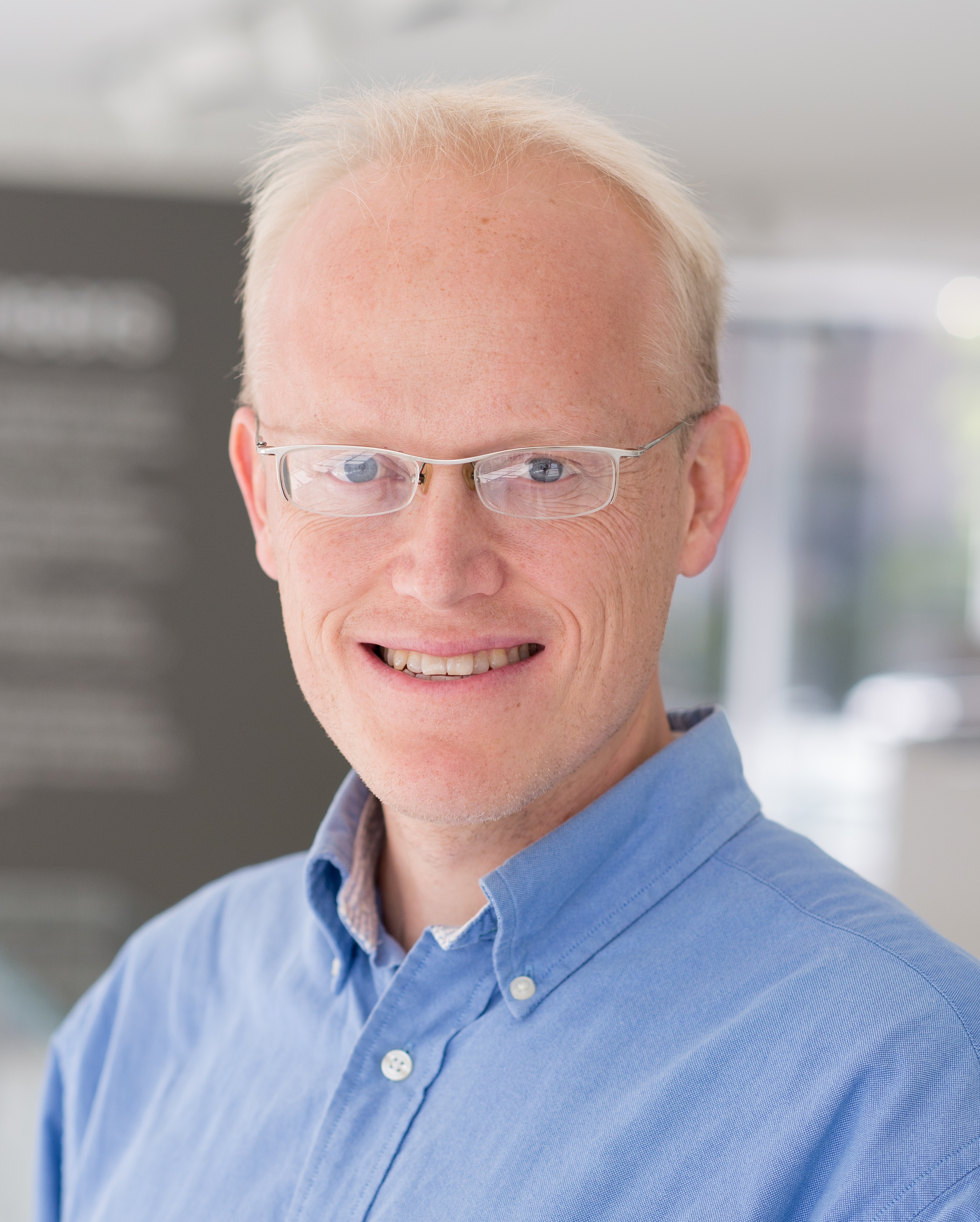}}]{Audun Botterud} (M'05) is a Principal Research Scientist in Laboratory for Information and Decision Systems (LIDS) at MIT, where he leads the Energy Analytics Group. He has a co-appointment in the Energy Systems and Infrastructure Analysis Division at Argonne National Laboratory. His research interests include power systems, electricity markets, renewable energy, and energy storage. Audun holds a M.Sc. (Industrial Engineering) and a Ph.D. (Electrical Power Engineering), both from the Norwegian University of Science and Technology. He was previously with SINTEF Energy Research in Trondheim, Norway.
\end{IEEEbiography}

\clearpage
\newpage
\onecolumn
\appendix
\subsection{Tractable Reformulation of the Multi-Stage Chance-Constrained Investment Problem \eqref{prog:sto}}\label{app:full_tract_ref}
Using uncertainty model \eqref{eq:uncertain_data_def}, linear decision rules \eqref{eq:LDRs}, and following the derivations in Section \ref{sec:cc_ldr}, the multi-stage chance-constrained investment problem \eqref{prog:sto} takes the following second-order cone programming form:
\begin{subequations}\label{prog:full_tract_ref}
\begin{align}
\minimize{\overline{\mathcal{V}}}\quad&\sum_{t=1}^{T}\Big(
\text{Tr}[S_{t}\widehat{\Sigma} S_{t} (Q_{t}^{\text{g}\top}\overline{Y}_{t} + 
Q_{t}^{\text{s}\top}\overline{\Theta}_{t} +
Q_{t}^{\text{p}\top}\overline{\Phi}_{t})
]
+o_{t}^{\text{e}\top}\overline{P}_{t}S_{t}\mathbb{1} 
+\sum_{\tau=1}^{t}\Big(
o_{t}^{\text{c}\top}\overline{Y}_{\tau}  + {}
o_{t}^{\text{s}\top}\overline{\Theta}_{\tau} + 
o_{t}^{\text{p}\top}\overline{\Phi}_{\tau}\Big)S_{\tau}\mathbb{1}  \nonumber \\
&\quad\quad+\sum_{w=1}^{W}\omega_{w}\sum_{h=1}^{H}\text{Tr}[
S_{t}\widehat{\Sigma} S_{t} (
C_{t}^\fl{e}P_{twh} + C_{t}^\fl{c}Y_{twh})
]\Big)\\
\st\quad
&
\mathbb{1}^{\top}\big(P_{twh} + Y_{twh} + \Phi_{twh}^{\shortplus} - k_{twh}^{\ell}\circ L_{t} - \Phi_{twh}^{\shortminus} \big) = \mathbb{0},\\
&
\Theta_{twh} - \Theta_{tw(h-1)} - \Phi_{twh}^{\shortplus}\eta^{\shortplus}+ \Phi_{twh}^{\shortminus}/\eta^{\shortminus} = \mathbb{0},\\
&\left.\begin{aligned}
&\norm{
\begin{bmatrix*}
\overline{\Sigma}\left[F(P_{twh} + Y_{twh} + \Phi_{twh}^{\shortminus} - k_{twh}^{\ell} \circ L_{t} - \Phi_{twh}^{\shortplus})S_{t}\right]_{e}^{\top}\\
z_{twhe}^{\overline{f}}
\end{bmatrix*}
}\leqslant
\sqrt{\overline{\varepsilon}^{\fl{f}}}\left(\overline{f}_{e} - x_{twhe}^{\overline{f}}\right),\\
&\left|\left[F(P_{twh} + Y_{twh} + \Phi_{twh}^{\shortminus} - k_{twh}^{\ell} \circ L_{t} - \Phi_{twh}^{\shortplus})S_{t}\right]_{e}\mathbb{1}\right|\leqslant
z_{twhe}^{\overline{f}} + x_{twhe}^{\overline{f}},\\
&\overline{f}_{e} \geqslant x_{twhe}^{\overline{f}} \geqslant 0, z_{twhe}^{\overline{f}} \geqslant 0
\end{aligned}
\right\}\forall e\in\llbracket E\rrbracket,\label{tractable_ref_flow}\\
&
\left.\begin{aligned}
&\norm{
\begin{bmatrix*}
\overline{\Sigma}\left[P_{twh}S_{t}\right]_{i}^{\top}\\
z_{twhi}^{\overline{p}}
\end{bmatrix*}
} \leqslant \sqrt{\overline{\varepsilon}^{\fl{g}}}\left(\tfrac{1}{2}k_{twhi}^{\fl{e}}\overline{p}_{ti} - x_{twhi}^{\overline{p}}\right),\\
&\left|\left[P_{twh}S_{t}\right]_{i}\mathbb{1} - \tfrac{1}{2}k_{twhi}^{\fl{e}}\overline{p}_{ti}\right|\leqslant z_{twhi}^{\overline{p}} + x_{twhi}^{\overline{p}},\\
&\tfrac{1}{2}k_{twhi}^{\fl{e}}\overline{p}_{ti} \geqslant x_{twhi}^{\overline{p}} \geqslant 0, z_{twhi}^{\overline{p}} \geqslant 0
\end{aligned}
\right\}\forall i\in\llbracket N\rrbracket,\label{tractable_ref_gen_ds}\\
&\left.\begin{aligned}
&\norm{\overline{\Sigma}\left[k_{twhi}^{\fl{e}}\circ\textstyle\sum_{\tau=1}^{t}\overline{Y}_{\tau}S_{\tau} - Y_{twh}S_{t}\right]_{i}^{\top}}\leqslant\tfrac{1}{\sqrt{(1-\overline{\varepsilon}^{\fl{g}})/\overline{\varepsilon}^{\fl{g}}}}
\left[k_{twhi}^{\fl{e}}\circ\textstyle\sum_{\tau=1}^{t}\overline{Y}_{\tau}S_{\tau} - Y_{twh}S_{t}\right]_{i}\mathbb{1},\\
&\norm{\overline{\Sigma}\left[Y_{twh}S_{t}\right]_{i}^{\top}}\leqslant\tfrac{1}{\sqrt{(1-\overline{\varepsilon}^{\fl{g}})/\overline{\varepsilon}^{\fl{g}}}}
\left[Y_{twh}S_{t}\right]_{i}\mathbb{1}\\
\end{aligned}
\right\}\forall i\in\llbracket N\rrbracket,\label{tractable_ref_gen_ss}\\
&
\left.\begin{aligned}
&\norm{
\begin{bmatrix*}
\overline{\Sigma}\left[\left(P_{twh}-P_{tw(h-1)}\right)S_{t}\right]_{i}^{\top}\\
z_{twhi}^{r^{\fl{e}}}
\end{bmatrix*}
}\leqslant
\sqrt{\overline{\varepsilon}^{\fl{r}}}
\tfrac{1}{2}(r_{i}^{\fl{e}\shortplus}+r_{i}^{\fl{e}\shortminus})\overline{p}_{ti} - x_{twhi}^{r^{\fl{e}}},
\\
&\left|\left[\left(P_{twh}-P_{tw(h-1)}\right)S_{t}\right]_{i}\mathbb{1}-\tfrac{1}{2}(r_{i}^{\fl{e}\shortplus}+r_{i}^{\fl{e}\shortminus})\overline{p}_{ti}\right|\leqslant z_{twhi}^{r^{\fl{e}}} + x_{twhi}^{r^{\fl{e}}}\\
&\tfrac{1}{2}(r_{i}^{\fl{e}\shortplus}+r_{i}^{\fl{e}\shortminus})\overline{p}_{ti}\geqslant x_{twhi}^{r^{\fl{e}}} \geqslant 0, z_{twhi}^{r^{\fl{e}}} \geqslant 0,
\end{aligned}
\right\}\forall i\in\llbracket N\rrbracket,\label{tractable_ref_ramp_ds}\\
&
\left.\begin{aligned}
&\norm{
\overline{\Sigma}\left[
r^{\fl{c}\shortminus}\circ\textstyle\sum_{\tau=1}^{t}\overline{Y}_{\tau}S_{\tau}+Y_{twh}S_{t}-Y_{tw(h-1)}S_{t}
\right]_{i}^{\top}
}\\
&\quad\leqslant
\tfrac{1}{\sqrt{(1-\overline{\varepsilon}^{\fl{g}})/\overline{\varepsilon}^{\fl{r}}}}\left[
r^{\fl{c}\shortminus}\circ\textstyle\sum_{\tau=1}^{t}\overline{Y}_{\tau}S_{\tau}+Y_{twh}S_{t}-Y_{tw(h-1)}S_{t}
\right]_{i}\mathbb{1},\\
&\norm{
\overline{\Sigma}\left[
r^{\fl{c}\shortplus}\circ\textstyle\sum_{\tau=1}^{t}\overline{Y}_{\tau}S_{\tau}-Y_{twh}S_{t}+Y_{tw(h-1)}S_{t}
\right]_{i}^{\top}
}\\
&\quad\leqslant
\tfrac{1}{\sqrt{(1-\overline{\varepsilon}^{\fl{g}})/\overline{\varepsilon}^{\fl{r}}}}\left[
r^{\fl{c}\shortplus}\circ\textstyle\sum_{\tau=1}^{t}\overline{Y}_{\tau}S_{\tau}-Y_{twh}S_{t}+Y_{tw(h-1)}S_{t}
\right]_{i}\mathbb{1},
\end{aligned}
\right\}\forall i\in\llbracket N\rrbracket,\label{tractable_ref_ramp_ss}\\
&\left.\begin{aligned}
&\norm{\overline{\Sigma}\left[\textstyle\sum_{\tau=1}^{t}\overline{\Theta}_{\tau wh}S_{\tau} - \Theta_{twh}S_{t}\right]_{i}^{\top}}\leqslant \tfrac{1}{\sqrt{(1-\overline{\varepsilon}^{\fl{s}})/\overline{\varepsilon}^{\fl{s}}}}\left[\textstyle\sum_{\tau=1}^{t}\overline{\Theta}_{\tau wh}S_{\tau} - \Theta_{twh}S_{t}\right]_{i}\mathbb{1},\\
&\norm{\overline{\Sigma}\left[\Theta_{twh}S_{t}\right]_{i}^{\top}}\leqslant \tfrac{1}{\sqrt{(1-\overline{\varepsilon}^{\fl{s}})/\overline{\varepsilon}^{\fl{s}}}}\left[\Theta_{twh}S_{t}\right]_{i}\mathbb{1},\\
&\norm{\overline{\Sigma}\left[\textstyle\sum_{\tau=1}^{t}\overline{\Phi}_{\tau wh}S_{\tau} - \Phi_{twh}^{\shortplus}S_{t}\right]_{i}^{\top}}\leqslant \tfrac{1}{\sqrt{(1-\overline{\varepsilon}^{\fl{s}})/\overline{\varepsilon}^{\fl{s}}}}\left[\textstyle\sum_{\tau=1}^{t}\overline{\Phi}_{\tau wh}S_{\tau} - \Phi_{twh}^{\shortplus}S_{t}\right]_{i}\mathbb{1},\\
&\norm{\overline{\Sigma}\left[\Phi_{twh}^{\shortplus}S_{t}\right]_{i}^{\top}}\leqslant \tfrac{1}{\sqrt{(1-\overline{\varepsilon}^{\fl{s}})/\overline{\varepsilon}^{\fl{s}}}}\left[\Phi_{twh}^{\shortplus}S_{t}\right]_{i}\mathbb{1},\\
&\norm{\overline{\Sigma}\left[\textstyle\sum_{\tau=1}^{t}\overline{\Phi}_{\tau wh}S_{\tau} - \Phi_{twh}^{\shortminus}S_{t}\right]_{i}^{\top}}\leqslant \tfrac{1}{\sqrt{(1-\overline{\varepsilon}^{\fl{s}})/\overline{\varepsilon}^{\fl{s}}}}\left[\textstyle\sum_{\tau=1}^{t}\overline{\Phi}_{\tau wh}S_{\tau} - \Phi_{twh}^{\shortminus}S_{t}\right]_{i}\mathbb{1},\\
&\norm{\overline{\Sigma}\left[\Phi_{twh}^{\shortminus}S_{t}\right]_{i}^{\top}}\leqslant \tfrac{1}{\sqrt{(1-\overline{\varepsilon}^{\fl{s}})/\overline{\varepsilon}^{\fl{s}}}}\left[\Phi_{twh}^{\shortminus}S_{t}\right]_{i}\mathbb{1},\\
&\norm{\overline{\Sigma}\left[\textstyle\sum_{\tau=1}^{t}\overline{\Phi}_{\tau wh}S_{\tau} - (\Phi_{twh}^{\shortplus} + \Phi_{twh}^{\shortminus})S_{t}\right]_{i}^{\top}}\leqslant \tfrac{1}{\sqrt{(1-\overline{\varepsilon}^{\fl{s}})/\overline{\varepsilon}^{\fl{s}}}}\left[\textstyle\sum_{\tau=1}^{t}\overline{\Phi}_{\tau wh}S_{\tau} - (\Phi_{twh}^{\shortplus} + \Phi_{twh}^{\shortminus})S_{t}\right]_{i}\mathbb{1}\\
\end{aligned}
\right\}\forall i\in\llbracket N\rrbracket,\label{tractable_ref_stor_ss}
\end{align}
\end{subequations}
in variables $\overline{\mathcal{V}} = \{\overline{Y},\overline{\Theta},\overline{\Phi},P,Y,\Theta,\Phi^{\mydot},z^{\mydot},x^{\mydot}\}$, plus emission and investment limits as in \eqref{eq:single_sided_ref} and \eqref{cc_inv_ref_1}--\eqref{cc_inv_ref_3}, respectively. Here, the double-sided entries in power flow constraint \eqref{cc_flow} are reformulated into \eqref{tractable_ref_flow} with $\overline{\varepsilon}^{\fl{f}}=\varepsilon^{\fl{f}}/E$. The double-sided existing generation limits in \eqref{cc_gen} are reformulated into \eqref{tractable_ref_gen_ds}, and single-sided candidate generation limits in \eqref{cc_gen} are reformulated into \eqref{tractable_ref_gen_ss}, while fixing $\overline{\varepsilon}^{\fl{g}}=\varepsilon^{\fl{g}}/(3N)$. Similarly, the double-sided ramping limits on existing generation in \eqref{cc_ramp} are reformulated into \eqref{tractable_ref_ramp_ds} and the single-sided ramping limits on candidate generation in \eqref{cc_ramp} are reformulated into \eqref{tractable_ref_ramp_ss}. Here, we set $\overline{\varepsilon}^{\fl{r}}=\varepsilon^{\fl{r}}/(3N)$. All entries in the operational storage constraint \eqref{cc_stor} are single-sided and reformulated into \eqref{tractable_ref_stor_ss} with $\overline{\varepsilon}^{\fl{s}}=\varepsilon^{\fl{s}}/(7N)$. 

\subsection{The Dual Stochastic Problem Formulation}\label{app:sto_dual}
The dual problem of the chance-constrained program \eqref{prog:sto} takes the following  form:
\begin{subequations}\label{prog:sto_dual_intr}
\begin{align}
\maximize{\lambda}\quad&\mathbb{E}\Bigg[\sum_{t=1}^{T}\Bigg[\sum_{w=1}^{W}\Bigg\langle\sum_{h=1}^{H}\Bigg(
\mathbb{1}^{\top}\text{diag}\left[k_{twh}^{\ell}\right]\ell_{t}(\boldsymbol{\xi}^{t})\lambda_{twh}^{b}(\boldsymbol{\xi}^{t})  
-\left(F\text{diag}\left[k_{twh}^{\ell}\right]\ell_{t}(\boldsymbol{\xi}^{t}) + \overline{f}\right)^{\top}\lambda_{twh}^{\overline{f}}(\boldsymbol{\xi}^{t})
\nonumber\\&\quad\quad\quad
+\left(F\text{diag}\left[k_{twh}^{\ell}\right]\ell_{t}(\boldsymbol{\xi}^{t}) - \overline{f}\right)^{\top}\lambda_{twh}^{\underline{f}}(\boldsymbol{\xi}^{t})
-\left(\text{diag}\left[k_{twh}^{\fl{e}}\right]\overline{p}_{t}\right)^{\top}\lambda_{twh}^{p}(\boldsymbol{\xi}^{t})\Bigg)\Bigg\rangle
\nonumber\\&\quad\quad\quad
-\overline{e}_{t}(\boldsymbol{\xi}^{t})\lambda_{t}^{e}(\boldsymbol{\xi}^{t}) 
-\overline{y}_{t}^{\fl{max}\top}\lambda_{t}^{\overline{y}}(\boldsymbol{\xi}^{t}) 
-\overline{\varphi}_{t}^{\fl{max}\top}\lambda_{t}^{\overline{\varphi}}(\boldsymbol{\xi}^{t}) 
-\overline{\vartheta}_{t}^{\fl{max}\top}\lambda_{t}^{\overline{\vartheta}}(\boldsymbol{\xi}^{t})\Bigg]
\nonumber\\&
-\sum_{t=1}^{T}\sum_{w=1}^{W}\sum_{h=2}^{H}\left(
\left(\text{diag}\left[r^{\fl{e}\shortminus}\right]\overline{p}_{t}\right)^{\top}\lambda_{twh}^{r^{\fl{e}\shortminus}}(\boldsymbol{\xi}^{t})
+\left(\text{diag}\left[r^{\fl{e}\shortplus}\right]\overline{p}_{t}\right)^{\top}\lambda_{twh}^{r^{\fl{e}\shortplus}}(\boldsymbol{\xi}^{t})\right)\Bigg]
\\
\st\quad
&\mathbb{P}\!\!\left[
\begin{aligned}
& q_{t}^{\fl{g}}(\boldsymbol{\xi}^{t})+\sum_{\tau=t}^{T}o_{\tau}^{\fl{g}}\leqslant
\sum_{\tau=t}^{T}
\sum_{w=1}^{W}
\Bigg(
\sum_{h=1}^{H}
\text{diag}\left[k_{twh}^{\fl{c}}\right]^{\top}\lambda_{\tau wh}^{y}(\boldsymbol{\xi}^{\tau}) \nonumber\\
&\quad\quad\quad\quad\quad\quad\;\;+ 
\sum_{h=2}^{H}
\big(\text{diag}\left[r^{\fl{c}\shortminus}\right]^{\top}\lambda_{\tau wh}^{r^{\fl{c}\shortminus}}(\boldsymbol{\xi}^{\tau})
+\text{diag}\left[r^{\fl{c}\shortplus}\right]^{\top}\lambda_{\tau wh}^{r^{\fl{c}\shortplus}}(\boldsymbol{\xi}^{\tau})\big)
\Bigg)
- \lambda_{t}^{\overline{y}}(\boldsymbol{\xi}^{t})\\
\end{aligned}
\right]\!\!\geqslant\!1\!-\!\varepsilon^{\overline{y}}\\
&\mathbb{P}\!\!\left[
\begin{aligned}
& q_{t}^{\fl{s}}(\boldsymbol{\xi}^{t})+\sum_{\tau=t}^{T}o_{\tau}^{\fl{s}}\leqslant
\sum_{\tau=t}^{T}
\sum_{w=1}^{W}
\sum_{h=1}^{H}
\lambda_{\tau wh}^{\vartheta}(\boldsymbol{\xi}^{\tau})
- \lambda_{t}^{\overline{\vartheta}}(\boldsymbol{\xi}^{t})\\
\end{aligned}
\right]\!\!\geqslant\!1\!-\!\varepsilon^{\overline{\vartheta}}\\
&\mathbb{P}\!\!\left[
\begin{aligned}
& q_{t}^{\fl{p}}(\boldsymbol{\xi}^{t})+\sum_{\tau=t}^{T}o_{\tau}^{\fl{p}}\leqslant
\sum_{\tau=t}^{T}
\sum_{w=1}^{W}
\sum_{h=1}^{H}
\left(\lambda_{\tau wh}^{\varphi^{\shortplus}}(\boldsymbol{\xi}^{\tau})
+\lambda_{\tau wh}^{\varphi^{\shortminus}}(\boldsymbol{\xi}^{\tau})
+\lambda_{\tau wh}^{\varphi}(\boldsymbol{\xi}^{\tau})
\right)
- \lambda_{t}^{\overline{\vartheta}}(\boldsymbol{\xi}^{t})\\
\end{aligned}
\right]\!\!\geqslant\!1\!-\!\varepsilon^{\varphi}\\
&\mathbb{P}\!\!\left[
\begin{aligned}
& \omega_{w}c_{t}^{\fl{e}}(\boldsymbol{\xi}^{t})\leqslant
\lambda_{twh}^{b}(\boldsymbol{\xi}^{t})\mathbb{1}
+F^{\top}\left(\lambda_{twh}^{\underline{f}}(\boldsymbol{\xi}^{t}) - \lambda_{twh}^{\overline{f}}(\boldsymbol{\xi}^{t})\right)
-\lambda_{twh}^{p}(\boldsymbol{\xi}^{t})\nonumber\\
&\quad\quad\quad\quad+\underbrace{\lambda_{twh}^{r^{\fl{e}\shortminus}}(\boldsymbol{\xi}^{t}) - \lambda_{twh}^{r^{\fl{e}\shortplus}}(\boldsymbol{\xi}^{t})}_{h\geqslant2}
+\underbrace{\lambda_{tw(h+1)}^{r^{\fl{e}\shortplus}}(\boldsymbol{\xi}^{t}) - \lambda_{tw(h+1)}^{r^{\fl{e}\shortminus}}(\boldsymbol{\xi}^{t})}_{h<H}
-\omega_{w}\lambda_{t}^{e}(\boldsymbol{\xi}^{t})e^{\fl{e}}
\end{aligned}
\right]\!\!\geqslant\!1\!-\!\varepsilon^{p}\\
&\mathbb{P}\!\!\left[
\begin{aligned}
& \omega_{w}c_{t}^{\fl{c}}(\boldsymbol{\xi}^{t})\leqslant
\lambda_{twh}^{b}(\boldsymbol{\xi}^{t})\mathbb{1}
+F^{\top}\left(\lambda_{twh}^{\underline{f}}(\boldsymbol{\xi}^{t}) - \lambda_{twh}^{\overline{f}}(\boldsymbol{\xi}^{t})\right)
-\lambda_{twh}^{y}(\boldsymbol{\xi}^{t})\\
&\quad\quad\quad\quad+\underbrace{\lambda_{twh}^{r^{\fl{c}\shortminus}}(\boldsymbol{\xi}^{t}) - \lambda_{twh}^{r^{\fl{c}\shortplus}}(\boldsymbol{\xi}^{t})}_{h\geqslant2}
+\underbrace{\lambda_{tw(h+1)}^{r^{\fl{c}\shortplus}}(\boldsymbol{\xi}^{t}) - \lambda_{tw(h+1)}^{r^{\fl{c}\shortminus}}(\boldsymbol{\xi}^{t})}_{h<H}
-\omega_{w}\lambda_{t}^{e}(\boldsymbol{\xi}^{t})e^{\fl{c}}
\end{aligned}
\right]\!\!\geqslant\!1\!-\!\varepsilon^{y}\\
&\mathbb{P}\!\!\left[
\begin{aligned}
&\mathbb{0}\leqslant
\lambda_{twh}^{b}(\boldsymbol{\xi}^{t})\mathbb{1}
+F^{\top}\left(\lambda_{twh}^{\underline{f}}(\boldsymbol{\xi}^{t}) - \lambda_{twh}^{\overline{f}}(\boldsymbol{\xi}^{t})\right) \\
&\quad\!\underbrace{+ \lambda_{twh}^{s}(\boldsymbol{\xi}^{t})\tfrac{1}{\eta^{\shortminus}}}_{h\geqslant2}
\underbrace{+ \lambda_{tw}^{s1}(\boldsymbol{\xi}^{t})\tfrac{1}{\eta^{\shortminus}}}_{h=1}
-\lambda_{twh}^{\varphi^{\shortminus}}(\boldsymbol{\xi}^{t})
-\lambda_{twh}^{\varphi}(\boldsymbol{\xi}^{t})\\
\end{aligned}
\right]\!\!\geqslant\!1\!-\!\varepsilon^{\varphi^{\shortminus}}\\
&\mathbb{P}\!\!\left[
\begin{aligned}
& \mathbb{0}\geqslant
\lambda_{twh}^{b}(\boldsymbol{\xi}^{t})\mathbb{1}
+F^{\top}\left(\lambda_{twh}^{\underline{f}}(\boldsymbol{\xi}^{t}) - \lambda_{twh}^{\overline{f}}(\boldsymbol{\xi}^{t})\right) \\
&\quad\!\underbrace{+ \lambda_{twh}^{s}(\boldsymbol{\xi}^{t})\eta^{\shortplus}}_{h\geqslant2}
\underbrace{+ \lambda_{tw}^{s1}(\boldsymbol{\xi}^{t})\eta^{\shortplus}}_{h=1}
+\lambda_{twh}^{\varphi^{\shortplus}}(\boldsymbol{\xi}^{t})
+\lambda_{twh}^{\varphi}(\boldsymbol{\xi}^{t})
\end{aligned}
\right]\!\!\geqslant\!1\!-\!\varepsilon^{\varphi^{\shortplus}}\\
&\mathbb{P}\!\!\left[
\begin{aligned}
\mathbb{0}\leqslant
\underbrace{\lambda_{tw}^{s1}(\boldsymbol{\xi}^{t})}_{h=1}
\underbrace{+ \lambda_{twh}^{s}(\boldsymbol{\xi}^{t})}_{h\geqslant2}
\underbrace{- \lambda_{tw(h+1)}^{s}(\boldsymbol{\xi}^{t})}_{h<H}
-\lambda_{twh}^{\vartheta}(\boldsymbol{\xi}^{t})
\end{aligned}
\right]\!\!\geqslant\!1\!-\!\varepsilon^{\vartheta}\\
&\forall\mathbb{P}\in\mathcal{P},\;\forall t\in\llbracket T\rrbracket,\;\forall w\in\llbracket W\rrbracket,\;\forall h\in\llbracket H\rrbracket,\nonumber
\end{align}
\end{subequations}
where the optimization variables are denoted by the Greek letter $\lambda$. Here, the expected value of the dual objective function is maximized subject to the series of joint chance constraints, where the subscript $\odot$ in  $\varepsilon^{\mydot}$ denotes the primal variables to which the dual constraints correspond. Note, that the tractable second-order cone programming form of problem \eqref{prog:sto_dual_intr} is achieved similarly to problem \eqref{prog:full_tract_ref} and omitted in the interest of space.


\endgroup
\end{document}